\documentclass[11pt,a4paper]{article}
\usepackage{amsmath,amsfonts,amsthm,amssymb,verbatim,graphicx,color}
\usepackage{authblk}
\usepackage{enumerate}
\usepackage{geometry}
\usepackage{tikz}
\usepackage{hyperref}
\usepackage{lineno}

\usetikzlibrary{shapes.geometric}
\usetikzlibrary{positioning,calc}
\usetikzlibrary{arrows}
\usetikzlibrary{arrows.meta,bending}
\usetikzlibrary{fit}
\tikzset{
    vertex/.style={
        circle,fill=black,scale=0.5
        },
    blank/.style={
        circle,fill=white
        },
    minivertex/.style={
        circle,draw,inner sep=1pt
        },
    bminivertex/.style={
        circle,fill=black,inner sep=1.5pt
        },
    stvertex/.style={
        circle,draw,scale=0.5
        },
    }

\definecolor{rvwvcq}{rgb}{0,0,0}
\definecolor{Gray}{RGB}{160,160,160}
\definecolor{orangeM}{HTML}{D53C00}

\newcommand{\adrian}[1]{\textcolor{red}{#1}}

\newcommand{\ana}[1]{\textcolor{magenta}{#1}}

\title{On the spectrum and expansion of graph associahedra\footnote{Partially supported by French-Argentinian International Research Project SINFIN  (www.irp-sinfin.org).
AG and PT were partially supported by PIP CONICET 1900, PID 80020210300068UR, and PPCT-UNR 80020250400056UR. AP was partially supported by PICT 2020-00549, PIP 11220220100068CO,  PROICO 03-0723, PROINI 03-124, and PROIPRO 03-2923.
\\
An extended abstract presenting some results of this paper appeared in the proceedings of LAGOS 2025 Symposium ~\cite{GPTV-2025}.
}}
\date{ }

\author[1]{Ana Gargantini}
\author[2]{Adri\'an Pastine}
\author[3]{Pablo Torres}
\author[4]{Mario Valencia-Pabon}

\affil[1]{\footnotesize FCEN, Universidad Nacional de Cuyo, Argentina}
\affil[2]{\footnotesize Instituto de Matem\'atica Aplicada San Luis (UNSL-CONICET) and Departamento de Matem\'atica,
Universidad Nacional de San Luis, Argentina}
\affil[3]{\footnotesize FCEIA, Universidad Nacional de Rosario, and CONICET, Argentina}
\affil[4]{\footnotesize Université de Lorraine, CNRS, Inria, LORIA, F-54000 Nancy, France}

\newcommand{\R}{\mathcal{R}}

\newcommand{\A}{\mathcal{A}}

\newcommand{\mc}[1]{\mathcal{#1}}

\newcommand{\SPK}{\textnormal{SPK}}

\newtheorem{lemma}{Lemma}
\newtheorem{theorem}{Theorem}

\newtheorem{corollary}{Corollary}
\newtheorem{remark}{Remark}
\newtheorem{claim}{Claim}

\begin{document}

\maketitle

\begin{abstract}
In this article, we contribute to the spectral analysis of graph associahedra by providing a lower bound for the second largest eigenvalue of $\mathcal{A}(G)$. Furthermore, using equitable partitions, we analyze the spectrum of the stellohedron $\mathcal{A}(K_{1,n})$. Specifically, we prove the existence of an eigenvalue in each interval $(n-i, n-i+1]$ for $1 \leq i \leq 5$, establish the presence of an eigenvalue with high multiplicity in $(n - \frac{3}{n} + \frac{2}{n^2-n}, n)$, and identify two additional small eigenvalues.

\medskip
\noindent {\bf Keywords}: graph associahedra, rotation graph, spectrum of graphs, edge expansion.

\noindent {\bf 2020 Mathematics Subject Classification:} 05C50, 05C05.
\end{abstract}

\section{Introduction}
\label{section_introduction}

The graph associahedron $\A(G)$ of a connected graph $G$ is a convex polytope whose 1-skeleton can be combinatorially described in terms of search trees on $G$ and the rotation operation. More precisely, this 1-skeleton is isomorphic to the rotation graph $\R(G)$ of $G$, which is the graph whose vertices correspond to the search trees on $G$ and whose edges correspond to single tree rotations \cite{CPV-2022a}. Graph associahedra generalize several well-known families of polytopes, including the classical associahedra, cyclohedra, and permutohedra (see \cite{CD-2006, Pos-2009}).

While previous studies have investigated the combinatorial properties of these graphs---such as diameter, chromatic number, and connectivity---by treating the 1-skeleton simply as the graph associahedron $\A(G)$, we explicitly adopt the framework and terminology of the rotation graph $\R(G)$.

Under this framework, the structural analysis of these objects can be extended to their algebraic features. However, the spectral analysis of graph associahedra remains largely undeveloped, with only a few results available for specific families of rotation graphs. For instance, Bacher \cite{bacher1994valeur} determined the explicit eigenvalues of the permutohedron, while Cioabă and Gupta \cite{CG-2022} obtained bounds on the smallest eigenvalue of the classical associahedron (i.e., the rotation graph of a path). Additionally, Krakovski and Mohar \cite{krakovski2012spectrum} proved that the spectrum of $\R(K_n)$ contains all integers from $-(n - 1)$ to $n - 1$ (except $0$ if $n = 2$ or $n = 3$).

A closely related parameter is the edge expansion of these graphs, a property intimately linked to their mixing time and connected to the second largest eigenvalue via the Cheeger's inequalities. Several bounds have recently been established for the expansion of specific families of graph associahedra. For the $n$-dimensional permutohedron, Collares, Doolittle, and Erde \cite{collares2024evolution} established lower and upper bounds of $\Omega(1/n^2)$ and $O(1/n)$, respectively. More recently, Eppstein and Frishberg \cite{EF-2023} proved that the edge expansion of the classical associahedron is $\Omega(\frac{1}{\sqrt{n}\ \log n})$ and $O(\frac{1}{\sqrt{n}})$, and Chang, Defant, and Frishberg \cite{CDF-2025} established analogous bounds for cyclohedra. In the broader context of $0/1$ polytopes, Mihail and Vazirani \cite{FM-1991,Mih-1992} conjectured that all graphs realizable as the $1$-skeleton of a $0/1$-polytope have expansion at least one. This conjecture is known to hold for
several families of $0/1$-polytopes as for instance simple $0/1$-polytopes \cite{VK04} or
base polytopes of matroids \cite{ALGV-2024b} but remains open in general. Recently, Cardinal and Pournin \cite{CP-2024} have shown that this conjecture does not hold for half-integer polytopes.

In this paper, we study the edge expansion and the second largest eigenvalue of $\R(G)$ for a connected graph $G$ on $n+1$ vertices, with $n \ge 2$. We show that $\lambda_2 \ge n-2$ holds for every rotation graph $\R(G)$, and subsequently refine this general lower bound for graphs $G$ containing twin vertices. As a consequence, we deduce that the spectral gap of $\R(G)$ vanishes asymptotically when $G$ is a complete bipartite graph or a complete split graph. Furthermore, Section~\ref{sec:stello} is devoted to the spectrum of the stellohedron $\R(K_{1,n})$: by employing equitable partitions, we prove the existence of an eigenvalue in each interval $(n-i, n-i+1]$ for $1 \le i \le 5$, establish an eigenvalue with high multiplicity in $(n - \frac{3}{n} + \frac{2}{n^2-n}, n)$, and identify two additional small eigenvalues.

\section{Preliminaries}

In this section we present the definitions and results needed throughout the text.

Let $G$ be a graph on $n$ vertices. The adjacency matrix $A_G$ of $G$ is the $n\times n$ matrix with entry $(A_G)_{ij}=1$ if the vertices $i$ and $j$ are adjacent, and $0$ otherwise. We denote the eigenvalues of $A_G$ by $\lambda_1 \geq \lambda_2 \geq \ldots \geq \lambda_{|\mathcal{R}(G)|}$. 

The {\em edge expansion} of graph $G$ is defined as
\[
h(G) = \min_{S\in\Omega}\frac{|\partial S|}{|S|},
\]
with $\Omega=\{S\subseteq V(G) \mid |S| \leq |V|/2\}$, and where for a subset $S\subseteq V(G)$, $\partial S$ is the set of edges with one endpoint in $S$ and the other in $V(G)\setminus S$.

\begin{theorem}[Cheeger's inequalities \cite{Alo-1986, SJ-1989}]
\label{theo-cheeger}
Let $G$ be a $d$-regular graph and let $\lambda_2$ be the second largest eigenvalue of the matrix $A_G$ of $G$. Then,
\[
\frac{d-\lambda_2}{2} \leq h(G) \leq \sqrt{2d(d - \lambda_2)}.
\]
\end{theorem}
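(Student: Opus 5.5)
This is the classical Cheeger inequality for $d$-regular graphs, which the paper only cites. I would prove the two inequalities separately. Throughout, $L=dI-A_G$ is the Laplacian, $N=|V(G)|$, and $\mu_2=d-\lambda_2$ is its second smallest eigenvalue. Since $\mathbf{1}$ spans the bottom eigenspace, Courant--Fischer gives
\[
\mu_2=\min_{x\perp \mathbf{1},\,x\neq 0}\frac{\sum_{uv\in E}(x_u-x_v)^2}{\sum_u x_u^2}.
\]

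\emph{Lower bound.} Let $S$ attain $h(G)$, with $s=|S|\le N/2$. I would take the test vector $x=\mathbf{1}_S-\frac{s}{N}\mathbf{1}$, which is orthogonal to $\mathbf{1}$. Its numerator is $|\partial S|$, since only boundary edges contribute, each giving $1$. Its denominator is $s(N-s)/N\ge s/2$. Hence $\mu_2\le |\partial S|/(s/2)=2h(G)$, which is $\frac{d-\lambda_2}{2}\le h(G)$.

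\emph{Upper bound.} I would use the standard sweep argument.
\begin{enumerate}
\item Let $f$ be an eigenvector of $L$ for $\mu_2$. Replacing $f$ by $-f$ if needed, the set $V^+=\{v: f_v>0\}$ has size at most $N/2$.
\item Set $g=\max(f,0)$. Using $(Lf)_v=\mu_2 f_v$ on $V^+$ and summing against $g$ gives
\[
\sum_{uv\in E}(g_u-g_v)^2\le \mu_2\sum_v g_v^2 .
\]
The reason is that edges from $V^+$ to its complement only help: for such an edge $f_u>0\ge f_v$, so $(f_u-f_v)f_u\ge f_u^2=(g_u-g_v)g_u$.
\item Order the vertices so that $g_{v_1}\ge g_{v_2}\ge\cdots$ and form the level sets $S_t=\{v: g_v^2>t\}$. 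Each $S_t$ lies in $V^+$, so every nonempty $S_t$ is in $\Omega$ and $|\partial S_t|\ge h(G)|S_t|$. Integrating over $t$ (the coarea argument) yields
\[
\sum_{uv\in E}|g_u^2-g_v^2|\ge h(G)\sum_v g_v^2 .
\]
\item Bound the left side by Cauchy--Schwarz:
\[
\sum_{uv\in E}|g_u^2-g_v^2|\le\Big(\sum_{uv\in E}(g_u-g_v)^2\Big)^{1/2}\Big(\sum_{uv\in E}(g_u+g_v)^2\Big)^{1/2}.
\]
Then use $\sum_{uv\in E}(g_u+g_v)^2\le 2\sum_{uv\in E}(g_u^2+g_v^2)=2d\sum_v g_v^2$ by $d$-regularity.
\item Combining steps 2--4 gives $h(G)\sum_v g_v^2\le\sqrt{\mu_2\cdot 2d}\,\sum_v g_v^2$, that is, $h(G)\le\sqrt{2d(d-\lambda_2)}$.
\end{enumerate}

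The main obstacle is step 2, the truncation inequality. One has to check carefully that the cross edges between $V^+$ and the rest have the right sign, so that truncating $f$ to $g$ does not increase the Rayleigh quotient. The remaining steps are routine bookkeeping.
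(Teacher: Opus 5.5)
Your proof is correct. The paper gives no argument of its own and only cites Alon and Sinclair--Jerrum; your route (the test vector $\mathbf{1}_S-\frac{s}{N}\mathbf{1}$ for the easy direction, then the truncation--coarea--Cauchy--Schwarz sweep for the hard direction) is the standard proof from those sources, with the same constants. One detail should be made explicit: choose $f$ orthogonal to $\mathbf{1}$, which is automatic when $\mu_2>0$, while if $\mu_2=0$ the graph is disconnected and $h(G)=0$ trivially. This guarantees $V^+\neq\emptyset$, so $\sum_v g_v^2>0$ and the division in step 5 is legitimate.
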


A \emph{search tree} $T$ on a connected graph $G$ is a rooted tree with vertex set $V(G)$, defined recursively as follows: the root of $T$ is a chosen vertex $r\in V(G)$, and the children of $r$ are the roots of the search trees on each connected component of $G-r$ \cite{CPV-2022a}.

Let $u, v \in V(T)$ adjacent and such that $v$ is a child of $u$ in $T$. The rotation of $u$ and $v$ (or $uv$-rotation) in $T$ transforms $T$ into a new search tree $T'$ on $G$ satisfying the following conditions:
\begin{enumerate}[(i)]
	\item $u$ is a child of $v$ in $T'$. If $u$ is the root of $T$, then $v$ is the root of $T'$. Otherwise, if $p$ is the parent of $u$ in $T$, then $p$ is the parent of $v$ in $T'$
	\item any subtree $S$ of $u$ in $T$ (distinct from the subtree rooted in $v$) remains a subtree of $u$ in $T'$
	\item each subtree $S$ of $v$ in $T$ is reattached according to adjacency in $G$: if $u$ is adjacent to some vertex of $S$ in $G$, then $S$ becomes a subtree of $u$ in $T'$; otherwise, $S$ remains a subtree of $v$.
\end{enumerate}

The \emph{rotation graph} $\R(G)$ of a connected graph $G$ is defined as the undirected graph whose vertices are the search trees on $G$, and where two search trees are adjacent if one can be obtained from the other by a single rotation operation as described above.

Figure \ref{fig:rotation_graph} shows the rotation graph of the star graph $K_{1,3}$ ($1$-skeleton of the 3-dimensional stellohedron). In this figure we denote by $a_1a_2a_3a_4$ the search tree that is a path with consecutive vertices $a_1,a_2,a_3,a_4$ rooted at $a_1$, by $a_1a_2{\substack{a_3\\a_4}}$ the search tree rooted at $a_1$ that consists in  the path $a_1a_2$ with $a_3$ and $a_4$ attached to $a_2$ as leaves, and, similarly, by $a_1{\substack{a_2\\a_3\\a_4}}$ the search tree rooted at $a_1$ with $a_2$, $a_3$ and $a_4$ attached to $a_1$ as leaves.

\begin{figure}
    \centering
    \begin{tikzpicture}
        \node[vertex,label=left:{\small $1$}] at (0,1) (1) {};
        \node[vertex,label=right:{\small $3$}] at (1,1) (3) {};
        \node[vertex,label=left:{\small $2$}] at (0,0) (2) {};
        \node[vertex,label=right:{\small $4$}] at (1,0) (4) {};
        \node at (0.5,-0.6) (K13) {$K_{1,3}$};
        \draw (1)--(2);
        \draw (1)--(3);
        \draw (1)--(4);
        \node at (0,-2) (b) {};
    \end{tikzpicture}
    \qquad
    \begin{tikzpicture}[line cap=round,line join=round,>=triangle 45,x=0.6cm,y=0.6cm]
    \draw [line width=1pt,color=Gray] (-0.79,0.58)-- (0.81,-1.02);
    \draw [line width=1pt,color=Gray] (0.81,2.22)-- (-0.79,0.58);
    \draw[line width=1pt,Gray] (-0.79,0.58)--(-3.6,1.20);
\draw [line width=1pt,color=Gray] (2.43,0.58)-- (3.65,0.3);
\draw [line width=1pt,color=Gray] (0.81,2.22)-- (0.21,3.72);
\draw [line width=1pt] (-2.15,3.56)-- (-1.11,3.38);
\draw [line width=1pt] (-1.11,3.38)-- (1.41,3.56);
\draw [line width=1pt] (1.41,3.56)-- (0.21,3.72);
\draw [line width=1pt] (0.21,3.72)-- (-2.15,3.56);

\draw [line width=1pt,color=Gray] (0.81,-1.02)-- (2.43,0.58);
\draw [line width=1pt,color=Gray] (2.43,0.58)-- (0.81,2.22);
\draw [line width=1pt] (3.21,1.74)-- (2.69,-0.38);
\draw [line width=1pt] (3.21,1.74)-- (3.65,0.3);
\draw [line width=1pt] (3.65,0.3)-- (3.21,-1.78);
\draw [line width=1pt] (3.21,-1.78)-- (2.69,-0.38);
\draw [line width=1pt,color=Gray] (0.81,-1.02)-- (0.25,-3.14);
\draw [line width=1pt] (1.41,3.56)-- (3.21,1.74);
\begin{scriptsize}
\draw [fill=rvwvcq] (-1.11,3.38) circle (2.5pt);
\draw[color=rvwvcq] (-0.98,3.9) node {3214};
\draw [fill=rvwvcq] (-2.15,3.56) circle (2.5pt);
\draw[color=rvwvcq] (-2.7,3.99) node{2314};
\draw [fill=rvwvcq] (0.21,3.72) circle (2.5pt);
\draw[color=rvwvcq] (0.37,4.15) node{2341};
\draw [fill=rvwvcq] (1.41,3.56) circle (2.5pt);
\draw[color=rvwvcq] (2,3.9) node{3241};
\draw [fill=rvwvcq] (3.21,1.74) circle (2.5pt);
\draw[color=rvwvcq] (3.9,2) node{3421};
\draw [fill=rvwvcq] (2.69,-0.38) circle (2.5pt);
\draw (3.5,-0.38) node[rectangle,minimum height=0.3cm,fill=white,fill opacity=0.8] { };
\draw[color=rvwvcq] (3.38,-0.38) node {3412};
\draw [fill=rvwvcq] (3.21,-1.78) circle (2.5pt);
\draw[color=rvwvcq] (3.89,-1.88) node{4312};
\draw [fill=rvwvcq] (3.65,0.3) circle (2.5pt);
\draw[color=rvwvcq] (4.4,0.3) node{4321};
\draw [fill=rvwvcq] (2.43,0.58) circle (2.5pt);
\draw[color=rvwvcq] (1.7,0.58) node{4231};
\draw [fill=rvwvcq] (0.81,2.22) circle (2.5pt);
\draw[color=rvwvcq] (0.1,2.22) node{2431};
\node[circle,fill=black,scale=0.7,label=below:{41${\substack{2\\3}}$}] at (0.25,-3.14) (Y) {};
\draw[color=rvwvcq] (-2.3,-4.05) node{};
\node[circle,fill=black,scale=0.7,label=left:{1${\substack{2\\3\\4}}$}] at (-3.01,-2.25) (B) {};
\draw[color=rvwvcq] (-0.7,-1.2) node[label=above:{31${\substack{2\\4}}$}] {};
\node[circle,fill=black,scale=0.7] at (-0.95,0) (G) {};
\node[circle,fill=black,scale=0.7,label=left:{21${\substack{3\\4}}$}] at (-3.6,1.20) (R) {};

\draw[line width=1pt,gray] (3.21,-1.78)--(Y);
\draw[line width=1pt] (B)--(Y);
\draw[line width=1pt] (R)--(B);
\draw[line width=1pt] (B)--(G);
\draw[line width=1pt] (-1.11,3.38)--(G);
\draw[line width=1pt] (2.69,-0.38)--(G);
\draw[line width=1pt] (-2.15,3.56)--(R);

\draw[line width=1pt,Gray] (0.81,-1.02)--(Y); 
\draw[line width=1pt] (3.21,-1.78)--(Y); 
\draw[color=rvwvcq] (-0.05,0.58) node{2413};
\draw[color=rvwvcq,fill=black] (1.4,-1.2) node{4213};
\draw [fill=rvwvcq] (-0.79,0.58) circle (2.5pt);
\draw [fill=rvwvcq] (0.81,-1.02) circle (2.5pt);
\end{scriptsize}

\end{tikzpicture}    
    \caption{The graph $K_{1,3}$ and the corresponding rotation graph $\R(K_{1,3})$.}
    \label{fig:rotation_graph}
\end{figure}

\section{Lower bound on the second largest eigenvalue of graph associahedra}

Notice that if $G$ is a connected graph with $n+1$ vertices, then $\mathcal{R}(G)$ is a $n$-regular graph. Thus, $\lambda_1 = n$ with eigenvector $\vec{1}$. In this section we establish a lower bound on the second largest eigenvalue $\lambda_2$ of the adjacency matrix of $\mathcal{R}(G)$. To this end, we make use of the well known fact that $\lambda_2$ is given by the Rayleigh quotient,
\[
\lambda_2 = \max_{\vec{z} \perp \vec{1}}\frac{\vec{z}^TA\vec{z}}{\vec{z}^T\vec{z}}.
\]

It is also known that any tree on $n\geq 2$ vertices contains at least two different vertices of degree one and that every connected graph contains a spanning tree (see \cite{Bol-2013}). The following remark is immediate.

\begin{remark}
\label{prop-kappas}
Let $G$ be a connected graph on $n > 2$ vertices. Then, $G$ contains at least two distinct vertices $x$ and $y$ such that $G-x$ and $G-y$ are both connected subgraphs of $G$.
\end{remark}

\begin{theorem}
\label{theo-second-eigenvalue}
Let $G$ be a connected graph on $n+1$ vertices with $n\geq 2$. Let $\mathcal{R}(G)$ be the $n$-regular rotation graph of $G$. Let $\lambda_2$ be the second largest eigenvalue of the adjacency matrix $A$ of $\mathcal{R}(G)$. Then, \[\lambda_2 \geq n-2.\]
\end{theorem}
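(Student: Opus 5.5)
We will use the Rayleigh quotient characterization of $\lambda_2$ and exhibit an explicit test vector $\vec z\perp\vec 1$ with $\vec z^TA\vec z\ge (n-2)\,\vec z^T\vec z$. By Remark~\ref{prop-kappas}, applied to $G$ (which has $n+1\ge 3$ vertices), we fix two distinct vertices $x,y$ such that $G-x$ and $G-y$ are connected. Let $X$ be the set of search trees rooted at $x$ and $Y$ the set of search trees rooted at $y$. Since $G-x$ is connected, a search tree rooted at $x$ consists of $x$ with a single child subtree, which is an arbitrary search tree on $G-x$. Hence $X$ is in bijection with $V(\R(G-x))$, and similarly for $Y$.

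\textbf{Step 1: structure inside $X$ and $Y$.} A rotation not involving the root $x$ acts only inside the unique subtree, so it is exactly a rotation of the search tree on $G-x$. The only rotation involving $x$ is the rotation of $x$ with its unique child, which leaves $X$. Thus the induced subgraph on $X$ is isomorphic to $\R(G-x)$, which is $(n-1)$-regular since $G-x$ has $n$ vertices. Each vertex of $X$ therefore has exactly one neighbour outside $X$, and likewise for $Y$. Moreover $X\cap Y=\emptyset$.

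\textbf{Step 2: the test vector.} We set $\vec z = |Y|\,\mathbf 1_X - |X|\,\mathbf 1_Y$, so that $\vec z\perp\vec 1$. Then $\vec z^T\vec z = |Y|^2|X|+|X|^2|Y|$. Write $E(X)$ and $E(Y)$ for the edge sets of the induced subgraphs on $X$ and $Y$, and $e(X,Y)$ for the number of edges between $X$ and $Y$. We compute
\[
\vec z^TA\vec z = 2|Y|^2|E(X)| + 2|X|^2|E(Y)| - 2|X||Y|\,e(X,Y).
\]
Substituting $2|E(X)|=(n-1)|X|$, $2|E(Y)|=(n-1)|Y|$, and $e(X,Y)\le\min(|X|,|Y|)$ (each vertex has only one outside neighbour), we get
\[
\vec z^TA\vec z \ge (n-1)\,|X||Y|(|X|+|Y|) - 2|X||Y|\min(|X|,|Y|) \ge (n-2)\,|X||Y|(|X|+|Y|),
\]
using $2\min(|X|,|Y|)\le |X|+|Y|$. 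Since $\vec z^T\vec z=|X||Y|(|X|+|Y|)$, the Rayleigh quotient is at least $n-2$, which gives $\lambda_2\ge n-2$.

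\textbf{Main obstacle.} The only delicate point is Step 1: checking against the rotation definition that the root rotation is the unique edge leaving $X$, and that the induced subgraph on $X$ really is $\R(G-x)$. This requires that, when the root has a single child, rotations below the root are unaffected by the root. Clause (iii) of the definition only reattaches subtrees among the rotated pair within their common subtree, so this should go through directly. The cross-term bound $e(X,Y)\le\min(|X|,|Y|)$ is crude but suffices, and one could even use $e(X,Y)\ge 0$ dropped with sign care.
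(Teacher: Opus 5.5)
Your proof is correct and is essentially the paper's argument. It uses the same pair $x,y$ of non-cut vertices, the same sets $X,Y$ of trees rooted at them, and the same test vector up to scaling: yours is the paper's $\mathbf{1}_X/|X|-\mathbf{1}_Y/|Y|$ multiplied by $|X||Y|$. The cross edges are controlled in the same way, since each tree in $X$ or $Y$ has only one neighbour outside its set, and your Step 1 simply spells out the $(n-1)$-regularity inside $X$ and $Y$ that the paper asserts without comment.

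The only slip is your closing aside. Since $e(X,Y)$ enters $\vec z^TA\vec z$ with a minus sign, $e(X,Y)\ge 0$ gives only an upper bound, so the estimate $e(X,Y)\le\min(|X|,|Y|)$ is genuinely needed and cannot be dropped.
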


\begin{proof}
By Remark \ref{prop-kappas}, $G$ contains two vertices $x$ and $y$ such that $G -x$ and $G-y$ are both connected subgraphs of $G$. Let $X\subset V(\mathcal{R}(G))$ be the set of search trees of $G$ rooted at vertex $x$ and let $Y\subset V(\mathcal{R}(G))$ be the set of search trees of $G$ rooted at $y$. Consider the vector $\vec{z}$ on $V(\mathcal{R}(G))$ given by
\[
\vec{z}_v=\begin{cases}
1/|X|, &\text{if $v\in X$,}\\
-1/|Y|, &\text{if $v\in Y$,}\\
0, &\text{otherwise.}
\end{cases}
\]

Notice that 
\[
[\vec{z}^T A]_v=\frac{|N(v)\cap X|}{|X|}-\frac{|N(v)\cap Y|}{|Y|}.
\]
Since each vertex in $X$ is adjacent to $n-1$ vertices in $X$ and at most one vertex in $Y$ and,  similarly, each vertex in $Y$ is adjacent to $n-1$ vertices in $Y$ and at most one vertex in $X$, then  $[\vec{z}^T A]_v \geq \frac{n-1}{|X|}-\frac{1}{|Y|}$ for $v\in X$, and $[\vec{z}^T A]_v \leq \frac{1}{|X|}-\frac{n-1}{|Y|}$ for $v\in Y$.

Therefore, using that there are $|X|$ vertices in $X$ and $|Y|$ vertices in $Y$, we get
\begin{align*}
\vec{z}^TA\vec{z}\geq& 
|X|\frac{1}{|X|}\left(\frac{n-1}{|X|}-\frac{1}{|Y|}\right)+|Y|\frac{1}{|Y|}\left(\frac{n-1}{|Y|}-\frac{1}{|X|}\right) =\\
=&\left(\frac{n-1}{|X|}-\frac{1}{|Y|}\right)+\left(\frac{n-1}{|Y|}-\frac{1}{|X|}\right)=\\
=& \frac{n-2}{|Y|}+\frac{n-2}{|X|}
=(n-2)\left(\frac{1}{|X|}+\frac{1}{|Y|}\right)
=(n-2)\frac{|X|+|Y|}{|X||Y|}.
\end{align*}
On the other hand,
\begin{align*}
\vec{z}^T\vec{z}=&|X|\frac{1}{|X|^2}+|Y|\frac{1}{|Y|^2}
=\frac{1}{|X|}+\frac{1}{|Y|}
=\frac{|X|+|Y|}{|X||Y|}.
\end{align*}
Therefore,
\begin{align*}
\frac{\vec{z}^TA\vec{z}}{\vec{z}^T\vec{z}}\geq & n-2.
\end{align*}

As $\vec{1}$ is an eigenvector of $A$ with the largest eigenvalue $n$ and as $\vec{z} \perp \vec{1}$ then, by Rayleigh's principle, the result follows. 
\end{proof}

\begin{remark}
Observe that if the connected component of $G-x$ that contains $y$ has more than one vertex (i.e. if $N_G(y)\neq \{x\}$, so $y$ is not a pendant vertex adjacent to $x$), then there exists a search tree $T$ on $G$ with $r_T=x$ (that is $T\in X$) and $xy\notin E(T)$. Such a tree $T$ has no neighbor in $Y$, and therefore $[\vec z^T A]_T>\frac{n-1}{|X|}-\frac{1}{|Y|}$. In this case the bound $\lambda_2\geq n-2$ is strict. Analogously, the bound is also strict if the connected component of $G-y$ that contains $x$ has more than one vertex.
\end{remark}

\subsection{Better bounds for some classes of graph associahedra}

In this section we bound the edge expansion $h(\R(G))$ from above using pairs of twin vertices. The strategy relies on the structural relationship between $\R(G)$ and $\R(G')$ when $G'$ is obtained from $G$ by adding a twin of a vertex $v\in V(G)$. Specifically, the insertion operation described below lets us split $V(\R(G'))$ into two pieces of comparable size whose boundary we can compute exactly in terms of $\R(G)$. This produces explicit upper bounds for $h$ (and, via Cheeger's inequality, lower bounds on the second eigenvalue $\lambda_2$) for permutohedra, associahedra of split complete graphs, and stellohedra.

The underlying strategy relies on the structural relationship between $\R(G)$ and $\R(G')$ when $G'$ is obtained by adding a twin of a vertex $v\in V(G)$. Specifically, the insertion operation described below allows us to partition $V(\R(G'))$ into two subsets of comparable size, whose edge boundary can be computed exactly in terms of $\R(G)$. This approach yields explicit upper bounds for $h$ (and, via Cheeger's inequality, upper bounds on the second largest eigenvalue $\lambda_2$) for permutohedra, stellohedra, and associahedra of split complete graphs.

Recall that the \textit{open neighborhood} of $v\in V(G)$ is $N_G(v)=\{u\in V(G)\mid uv\in E(G)\}$ and the \textit{closed neighborhood} of $v$ is $N_G[v]=N_G(v)\cup \{v\}$. Two vertices $u,v\in V(G)$ are \textit{false twins} if $N_G(v)=N_G(u)$, and they are \textit{true twins} if $N_G[u]=N_G[v]$.

We recall first the operation of \textit{insertion} of a vertex in a rooted tree. Let $T$ be a rooted tree and $v\in V(T)$. Denote $d_T(v)$ the distance in $T$ between the root of $T$ and $v$. For $i\in\{0,\ldots,d_T(v)\}$, we denote by $T(i, x, v)$ the rooted tree with vertex set $V(T) \cup \{x\}$ constructed as follows.
\begin{itemize}
    \item If $i=0$ (insertion before the root):\\
    $T(0, x, v)$ is the rooted tree with root is $x$ such that $T$ is the only subtree of $x$.
    \item If $1 \leq i \leq d_T(v)$ (insertion along the path from $r_T$ to $v$):\\
    Suppose the path from $r_T$ to $v$ has vertices $r_T=a_0,\ldots,a_{d_T(v)}=v$. Then $T(i,x,v)$ is the rooted tree with root $r_T$ obtained from $T$ by subdividing the edge $a_{i-1}a_i$ and labeling with $x$ the vertex added in the subdivision.
\end{itemize}

We denote $\R(G',v',v)$ the subgraph of $\R(G')$ induced by the set $\{T(i,v',v)\mid T\in\R(G), 0\leq i\leq d_T(g)\}$.

\begin{remark}[\cite{gargantini2026effect}]\label{rem:border_edges}

In a tree $T(i,v',v) \in \R(G',v',v)$, $v$ is a 
descendant of $v'$. Any $e$-rotation such that $e \neq vv'$ preserves this relationship between $v'$ and $v$. Thus, for every tree $T(i,x,v) \in \R(G',v',v)$, each $e$-rotation  with $e \neq vv'$ produces a search tree that belongs to $\R(G',v',v)$. In particular, if $i < d_T(v)$, the closed neighborhood of $T(i,x,v)$ in $\R(G')$ is completely contained in $\R(G',v',v)$, whereas the neighborhood of $T(d_T(v),v',v)$ has exactly one element outside it, obtained from $T(d_T(v),v',v)$ by applying a $vv'$-rotation, since in that tree $v'$ is a child of $v$.
\end{remark}

\begin{lemma}[Lemma 2.4(c) from \cite{gargantini2026effect}]
\label{lemma_ins_is_ST}
    Let $G'$ be a connected graph and let $x,v\in V(G)$ such that $N_{G'}[x]\subseteq N_{G'}[v]$. Denote $G=G'-x$. Let $T$ be a search tree on $G$ and $i\in\{0,\ldots,d_T(v)\}$. Then, $T(i,x,v)$ is a search tree on $G'$.
\end{lemma}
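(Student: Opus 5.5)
We argue by strong induction on $|V(G')|$, following the recursive definition of a search tree. Write $T' = T(i,x,v)$ and let $r_T$ be the root of $T$. We must show that the root of $T'$ is a vertex $r$ of $G'$, and that the subtrees hanging from $r$ are search trees on the connected components of $G'-r$. The key hypothesis is $N_{G'}[x]\subseteq N_{G'}[v]$. It implies that for any $S\subseteq V(G')$ with $x,v\notin S$, the vertex $x$ lies in the same component of $G'-S$ as $v$, unless $x$ is isolated there. Indeed, every neighbour of $x$ outside $S$ is either $v$ itself or a neighbour of $v$.

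\emph{Case $i=0$.} The root of $T'$ is $x$, and its only child subtree is $T$. We need $G'-x = G$ to be connected. We first check this, and it holds: $G'$ is connected and $x$ is dominated by $v$. Any path in $G'$ through $x$ enters via some $a$ and leaves via some $b$ with $a,b\in N_{G'}(x)\subseteq N_{G'}[v]$. We reroute it through $v$, or simply drop $x$ if $v\in\{a,b\}$. So $G$ is connected, and $T$ is a search tree on $G$, so $T'$ is a search tree on $G'$.

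\emph{Case $i\ge 1$.} The root of $T'$ is $r=r_T\neq x$. Since $i\ge1$, we also have $r\neq v$ unless $d_T(v)=0$, and that situation is impossible here. Let $C_1,\dots,C_k$ be the components of $G-r$, with subtrees $T_1,\dots,T_k$, and say $v\in C_1$. The components of $G'-r$ are $C_1\cup\{x\}$ together with $C_2,\dots,C_k$. To see this, note that $x$ is adjacent to some vertex of $G'-r$: otherwise $N_{G'}(x)\subseteq\{r\}$, and then $r$ would have to be $v$. That neighbour lies in $N[v]\setminus\{r\}\subseteq C_1$. Moreover, $x$ joins no two components, because all its neighbours other than $r$ lie in $C_1\cup\{v\}$.

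In $T'$ the subtrees of $r$ are $T_2,\dots,T_k$, which are unchanged, together with the modified subtree containing $x$. If $i=1$, this modified subtree is $T_1(0,x,v)$. If $i\ge2$, it is $T_1(i-1,x,v)$, since $T_1$ contains the path $a_1,\dots,v$. Now apply induction to the graph $G'[C_1\cup\{x\}]$. It is connected, and it satisfies $N[x]\subseteq N[v]$ after intersecting both sides with $C_1\cup\{x\}$. Removing $x$ from it gives $G[C_1]$, on which $T_1$ is a search tree. Hence the modified subtree is a search tree on $C_1\cup\{x\}$, and $T'$ is a search tree on $G'$.

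The main obstacle is the component bookkeeping at the inductive step. One must check that inserting $x$ neither merges two components of $G-r$ nor creates a new singleton component $\{x\}$. Both facts rest entirely on the domination $N_{G'}[x]\subseteq N_{G'}[v]$ together with $v\neq r$. A second point to verify is that the hypotheses really are inherited by the induced subgraph $G'[C_1\cup\{x\}]$, which is immediate from the definitions.
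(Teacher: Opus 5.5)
Your proof is correct. You use strong induction along the recursive definition of a search tree. The domination hypothesis is what shows that the components of $G'-r_T$ are exactly $C_1\cup\{x\},C_2,\dots,C_k$, and that is the real content. You also handle correctly the index shift to $T_1(i-1,x,v)$ (or $T_1(0,x,v)$ when $i=1$) and the inheritance of $N[x]\subseteq N[v]$ by $G'[C_1\cup\{x\}]$.

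The paper does not prove this lemma itself; it imports it from \cite{gargantini2026effect}. So there is no in-paper argument to compare yours against, and yours stands as a self-contained proof.

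One simplification is available. Since $x\in N_{G'}[x]\subseteq N_{G'}[v]$ and $x\neq v$, the hypothesis already forces $xv\in E(G')$. This edge puts $x$ in the same component as $v\in C_1$ immediately, so you do not need the detour through $N_{G'}(x)\subseteq\{r\}$. It also makes the ``unless $x$ is isolated'' caveat in your opening remark vacuous.

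Finally, the statement's ``$x,v\in V(G)$'' should read $V(G')$.
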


\begin{remark}[\cite{gargantini2026effect}]
\label{rem:notation}
Let $G'$ a connected graph and $v,v'\in V(G)$ a pair of true twins. Let $\rho\colon V(G')\rightarrow V(G')$  be the map defined by $\rho(v)=v'$, $\rho(v')=v$ and $\rho(x)=x$ if $x\neq v,v'$. Notice that $\rho$ is a graph automorphism. Moreover it induces a graph automorphism $\rho^\ast\colon \R(G')\rightarrow \R(G')$, where $\rho^\ast(T)$ is the search tree on $G'$ with vertex set $V(T)$ and edge set $\{\rho(x)\rho(y)\mid xy\in E(T)\}$.
\end{remark}

Let $G$ be a connected graph and $v\in V(G)$. We denote by $G^t_v$ the graph obtained from $G$ by adding a vertex $v'$ such that $v$ and $v'$ are true twins.

\begin{lemma}[Proposition 3.7(a) from \cite{gargantini2026effect}]
\label{lemma_halves_of_Rtt}
   Let $G$ be a connected graph and $v\in V(G)$. $\R(G_v^t,v',v)$ and $\R(G_v^t,v,v')$ are isomorphic and disjoint induced subgraphs of $\R(G_v^t)$.
\end{lemma}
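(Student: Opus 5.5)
The plan is to exhibit an explicit isomorphism between the two subgraphs and then check disjointness directly. The isomorphism will be the restriction of the automorphism $\rho^\ast$ of Remark~\ref{rem:notation}. Lemma~\ref{lemma_ins_is_ST} is applied with $x=v'$: in $G_v^t$ we have $N[v']=N[v]$, so $N[v']\subseteq N[v]$. Hence every $T(i,v',v)$ with $T\in\R(G)$ and $0\le i\le d_T(v)$ is a search tree on $G_v^t$, and $\R(G_v^t,v',v)$ is a well-defined induced subgraph of $\R(G_v^t)$. Applying the lemma with the roles of $v$ and $v'$ swapped does the same for $\R(G_v^t,v,v')$. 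Here $G_v^t-v\cong G$ via the relabelling $v'\mapsto v$, i.e.\ via $\rho$.

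\textbf{Disjointness.} By Remark~\ref{rem:border_edges}, in every tree of $\R(G_v^t,v',v)$ the vertex $v$ is a descendant of $v'$. This is immediate from the construction: $v'$ is inserted on the root-to-$v$ path, or above the root. Symmetrically, in every tree of $\R(G_v^t,v,v')$ the vertex $v'$ is a descendant of $v$. In a rooted tree two distinct vertices cannot each be a proper descendant of the other, so the two vertex sets are disjoint.

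\textbf{Isomorphism.} I will show that $\rho^\ast$ maps $\R(G_v^t,v',v)$ onto $\R(G_v^t,v,v')$. Take $T(i,v',v)$. Applying $\rho$ to all labels gives the tree obtained from $\rho(T)$ by inserting $v$ at position $i$ on the path from the root to $v'$. Here $\rho(T)$ is the search tree on $G_v^t-v$ corresponding to $T$, and the distances agree: $d_{\rho(T)}(v')=d_T(v)$. So $\rho^\ast(T(i,v',v))=\rho(T)(i,v,v')$, which lies in $\R(G_v^t,v,v')$. Since $\rho^\ast$ is an involution, the same argument in reverse shows the map is onto. Because $\rho^\ast$ is an automorphism of $\R(G_v^t)$ and both subgraphs are induced, the restriction preserves both adjacency and non-adjacency. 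This gives the isomorphism.

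\textbf{Main obstacle.} The delicate step is the identity $\rho^\ast(T(i,v',v))=\rho(T)(i,v,v')$. One must check that relabelling commutes with insertion, in particular that the insertion index and the path positions are preserved, including the case $i=0$. This follows from $\rho$ fixing every vertex other than $v,v'$, so the root-to-$v$ path in $T$ maps to the root-to-$v'$ path in $\rho(T)$ vertex by vertex.
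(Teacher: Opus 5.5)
Your proof is correct: disjointness follows because $v$ is a proper descendant of $v'$ in every tree of $\R(G_v^t,v',v)$ and the reverse holds in $\R(G_v^t,v,v')$, and the isomorphism is the restriction of the automorphism $\rho^\ast$, via the identity $\rho^\ast(T(i,v',v))=\rho(T)(i,v,v')$. The paper imports this statement from its companion work rather than proving it, but it uses the result in exactly this form (two halves of $\R(G_v^t)$ exchanged by $\rho^\ast$), so your route is essentially the intended one.
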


\begin{theorem}
\label{theo_expansion_tt}
Let $G$ be a connected graph and $v\in V(G)$. Then,
\[
h(\R(G_v^t))\leq \frac{2|\R(G)|}{|\R(G_v^t)|}.
\]
\end{theorem}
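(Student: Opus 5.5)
The idea is to exhibit one explicit set $S\subseteq V(\R(G_v^t))$ with $|S|\le |\R(G_v^t)|/2$ and $|\partial S|/|S|\le 2|\R(G)|/|\R(G_v^t)|$. By the definition of $h$ this suffices. The natural candidate is $S=V(\R(G_v^t,v',v))$, the set of search trees on $G_v^t$ in which $v$ is a descendant of $v'$. By symmetry its twin half is $S'=V(\R(G_v^t,v,v'))$.

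\textbf{Size of $S$.} First, $S$ and $S'$ partition $V(\R(G_v^t))$. They are disjoint by Lemma~\ref{lemma_halves_of_Rtt}, so we only need that they cover everything. Since $v$ and $v'$ are adjacent true twins, in any search tree on $G_v^t$ one of them is an ancestor of the other. Indeed, whichever of $v,v'$ is removed first in the recursive construction, the other lies in the same component (they are adjacent), hence in its subtree. Moreover, every tree $T'$ in which $v$ descends from $v'$ arises as $T(i,v',v)$ for some $T\in\R(G)$. To see this, delete $v'$ from $T'$ and reattach its children to its parent. The result is a search tree on $G$, using $N[v']=N[v]$; this inverts the insertion, and Lemma~\ref{lemma_ins_is_ST} guarantees that insertions are search trees.

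Hence $|S|=|S'|=|\R(G_v^t)|/2$, where equality of the two sizes uses the isomorphism of Lemma~\ref{lemma_halves_of_Rtt} (or the automorphism $\rho^\ast$ of Remark~\ref{rem:notation}). In particular $S\in\Omega$.

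\textbf{Boundary of $S$.} By Remark~\ref{rem:border_edges}, a tree $T(i,v',v)$ with $i<d_T(v)$ has no neighbours outside $S$. Each tree $T(d_T(v),v',v)$, in which $v'$ is the parent of $v$, has exactly one neighbour outside $S$, namely the one produced by the $vv'$-rotation. Thus $|\partial S|$ equals the number of pairs $(T,d_T(v))$, which is exactly one per $T\in\R(G)$. The one point needing care is that distinct $T$ give distinct trees $T(d_T(v),v',v)$. This follows from the inverse map (contracting $v'$) described above, so $|\partial S|=|\R(G)|$.

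\textbf{Conclusion.} Therefore
\[
h(\R(G_v^t))\le \frac{|\partial S|}{|S|}=\frac{|\R(G)|}{|\R(G_v^t)|/2}=\frac{2|\R(G)|}{|\R(G_v^t)|}.
\]
The main obstacle is the bijection between $S$ and the pairs $\{(T,i)\}$, i.e.\ showing that insertion is injective and surjective onto $S$. That is what justifies both the exact half-size of $S$ and the exact boundary count. I would settle it via the contraction map, checking that contracting $v'$ yields a search tree on $G$ because $v$ inherits all of $v'$'s separating role.
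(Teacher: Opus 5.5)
Your proposal is correct and follows essentially the same route as the paper: you take $S=V(\R(G_v^t,v',v))$, use the twin symmetry to get $|S|=|\R(G_v^t)|/2$, and use Remark~\ref{rem:border_edges} to put $\partial S$ in bijection with $\R(G)$. Your extra checks, that the two halves cover $V(\R(G_v^t))$ and that insertion is injective (via contracting $v'$, which has a unique child because $N[v']=N[v]$), make explicit what the paper takes from the cited lemmas.
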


\begin{proof}
The strategy is to split $V(\R(G^t_v))$ into two isomorphic halves via the automorphism $\rho^\ast$ of Remark \ref{rem:notation}, and then bound the edge boundary of one half using the border description of Remark \ref{rem:border_edges}.

Let $\mathcal R(G^t_v,v',v)$ be the set of search trees on $G_v$ such that $v$ is a descendant of $v'$, and $\mathcal R(G^t_v,v,v')$ the set of search trees on $G_v$ such that $v'$ is a descendant of $v$. From Lemma \ref{lemma_halves_of_Rtt}, follows that $\{\mathcal R(G^t_v,v',v),\mathcal R(G^t_v,v,v')\}$ is a partition of $V(\R(G_v))$ and moreover, $|\mathcal R(G^t_v,v',v)|=|\mathcal R(G^t_v,v,v')|$. This implies that
\[
h(\R(G_v^t))\leq\displaystyle\frac{|\partial \mathcal R(G^t_v,v',v)|}{|\mathcal R(G^t_v,v',v)|}=\frac{2|\partial \mathcal R(G^t_v,v',v)|}{|\R(G_v^t)|}.
\]

From Remark \ref{rem:border_edges} follows that $\partial \R(G^t_v,v',v)$ is in one-to-one correspondence with $\R(G)$, and therefore,
\[
h(\R(G_v^t))\leq\displaystyle\frac{2|\R(G)|}{|\R(G_v^t)|}.
\]
\end{proof}

Since complete graphs and split complete graphs possess pairs of true twins, we can apply the previous theorem to obtain upper bounds for the edge expansion of permutohedra and associahedra of split complete graphs. 

Applying the formula to the permutohedron, recalling that  $|\R(K_n)|=n!$ for every positive integer $n$, this implies the bound $h(\R(K_n))\leq \frac{2}{n}$ given by Collares et al. \cite{collares2024evolution}, and as a consequence, if $\lambda_2$ is the second largest eigenvalue of the adjacency matrix of $\R(K_n)$ then, $\lambda_2\geq n-1-\frac{4}{n}$, by the left Cheeger's inequality (\ref{theo-cheeger}).

In the case of of split complete graphs, let $G=\operatorname{SPK}_{p,q}$, and $G'=\operatorname{SPK}_{p+1,q}$. Let $P$ be the clique of $G$, and let $x$ be the vertex that has been added to the clique in $G'$. 

Now, given a search tree $T\in \R(G)$,  we have that for every $v$ in the clique $P$, $T(d_T(v),x,v)\in V(\R(G'))$. This gives at least $|P|=p$ search trees in $\R(G')$ for each search tree in $\R(G)$. Notice that removing $x$ from the tree returns $T$, thus different trees in $\R(G)$ give different trees in $\R(G)$. Therefore, 
\[
|\R(G')|>  p|\R(G)|,
\]
where the inequality is strict since for a search tree $T\in V(\R(G))$ such that the root $r_T$ is in the independent set of $G$, then $T(0,x,r_T)$ is a search tree on $G'$ not counted in the previous argument. From this and Theorem \ref{theo_expansion_tt} we obtain the following corollary.

\begin{corollary}
Let $p,q\in\mathbb N$ and let $\lambda_2$ be the second largest eigenvalue of $\R(\SPK_{p+1,q})$. Then
\begin{enumerate}[(a)]
    \item $h(\R(\SPK_{p+1,q}))< \displaystyle\frac{2}{p}$, and 
    \item $\lambda_2>p+q-\frac{4}{p}$.
\end{enumerate}
\end{corollary}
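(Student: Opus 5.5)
We prove both parts by combining Theorem~\ref{theo_expansion_tt} with the strict counting inequality $|\R(\SPK_{p+1,q})|>p\,|\R(\SPK_{p,q})|$ established just before the statement, and then applying the left Cheeger inequality of Theorem~\ref{theo-cheeger}.

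\textbf{Setup.} Put $G=\SPK_{p,q}$ and pick a vertex $v$ in the clique $P$ of $G$. Adding a true twin $v'$ of $v$ yields a graph with clique of size $p+1$ and the same independent set of size $q$. Hence $G^t_v\cong\SPK_{p+1,q}$, and so $\R(G^t_v)\cong\R(\SPK_{p+1,q})$. This requires $p\ge 1$, so that $P$ is nonempty.

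\textbf{Part (a).} Theorem~\ref{theo_expansion_tt} gives
\[
h(\R(\SPK_{p+1,q}))\le \frac{2|\R(G)|}{|\R(G^t_v)|}.
\]
The preceding counting argument gives $|\R(G^t_v)|>p|\R(G)|$. Substituting,
\[
h(\R(\SPK_{p+1,q}))<\frac{2|\R(G)|}{p|\R(G)|}=\frac{2}{p}.
\]

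\textbf{Part (b).} The graph $\SPK_{p+1,q}$ has $p+q+1$ vertices, so $\R(\SPK_{p+1,q})$ is $d$-regular with $d=p+q$. The left Cheeger inequality gives $\frac{d-\lambda_2}{2}\le h$, that is,
\[
\lambda_2\ge d-2h>p+q-\frac{4}{p},
\]
where the strict inequality comes from part (a).

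\textbf{Checks.}
\begin{itemize}
\item The strictness in the counting step needs a search tree of $G$ rooted in the independent set, so we need $q\ge 1$. This is harmless here, since $q\in\mathbb N$.
\item Lemma~\ref{lemma_ins_is_ST} validates the insertions $T(d_T(u),x,u)$ for every clique vertex $u$, because $N[x]\subseteq N[u]$ for twins in the clique. It also validates $T(0,x,r_T)$ when the root $r_T$ is an independent vertex, because $x$ is adjacent to $r_T$ in $G'$. The tree $T(0,x,r_T)$ is rooted at $x$, so it is distinct from the other insertions counted.
\end{itemize}
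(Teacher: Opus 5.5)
Your proof is correct and follows the paper's own route: Theorem~\ref{theo_expansion_tt} combined with the strict count $|\R(\SPK_{p+1,q})|>p\,|\R(\SPK_{p,q})|$ gives (a), and the left Cheeger inequality with $d=p+q$ then gives (b). One small correction to your checks: Lemma~\ref{lemma_ins_is_ST} does not justify $T(0,x,r_T)$ when $r_T$ is an independent vertex, because its hypothesis $N_{G'}[x]\subseteq N_{G'}[r_T]$ fails when $q\ge 2$; the tree is nevertheless a search tree directly from the definition, since $G'-x=G$ is connected and $T$ is a search tree on $G$.
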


%

A similar bound holds when we add a false twin instead of a true twin, but the argument requires an extra correction term: unlike in the true-twin case, not every tree obtained by inserting $v'$ is a search tree on $G^f_v$, since some of the trees produced this way have $v$ and $v'$ adjacent with one of them a leaf. Given a connected graph $G$ and $v\in V(G)$, we denote by $G^f_v$ the graph obtained from $G$ by adding a vertex $v'$ such that $v$ and $v'$ are false twins.

For $v,v'$ true twins in $G_v^t$, let $\mathcal{S}(G^t_v)$ denote the set of search trees on $G^t_v$ in which $v$ and $v'$ are adjacent and one of them is a leaf. For $v,v'$ false twins in $G_v^f$, let $\mathcal T_\land(G)$ be the set of search trees $\tilde T\in\R(G_v^f)$ such that $v$ and $v'$  are both leaves of $\tilde T$.

\begin{lemma}[Propositions 3.8 and 3.9 from \cite{gargantini2026effect}]
\label{lemma:structure_of_ft}
Let $G$ be a connected graph and $v\in V(G)$. Then
\begin{enumerate}[(a)]
\item $\{V(\R(G_v^t))-\mathcal S(G_v^t),\ \mathcal T_\land(G)\}$
is a partition of $V(\R(G_v^f))$.
\item\label{item_subgraph_ft} The subgraphs $\R(G_v^f)-\mathcal T_\land(G)\subseteq\R(G_v^f)$ and $\R(G_v^t)-\mathcal S(G_v^t)\subseteq\R(G_v^t)$ are isomorphic.
\item If $T_\land$ and $T'_\land$ are two search trees from $\mathcal T_\land(G)$ that are adjacent in $\R(G_v^f)$, then $T$ and $T'$ are adjacent in $\R(G)$.
\item If $T_\land\in\mathcal T_\land(G)$, then its neighbors in $\R(G_v^t)-\mathcal S(G_v^t)$ are $T(d_T(v)-1,v',v)$ and $T(d_T(v)-1,v,v')$.
\end{enumerate}
\end{lemma}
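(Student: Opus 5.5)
The plan is to work throughout with the standard intrinsic characterization of search trees. A rooted tree $T$ on $V(H)$ is a search tree on $H$ if and only if (1) for every vertex $u$ the vertex set of the subtree rooted at $u$ induces a connected subgraph of $H$, and (2) every edge of $H$ joins two comparable vertices of $T$ (one is an ancestor of the other). Both $G_v^t$ and $G_v^f$ have vertex set $V(G)\cup\{v'\}$, and $E(G_v^t)=E(G_v^f)\cup\{vv'\}$. So we can compare, for a fixed rooted tree, the two conditions in the two graphs.

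\textbf{Part (a).} Let $\tilde T$ be a search tree on $G_v^f$. If $v,v'$ are incomparable, consider their deepest common ancestor $u$; then $v$ and $v'$ lie in different components of $G_v^f$ minus the ancestors of $v$ (inclusive of $u$). Since $N(v)=N(v')$, every remaining neighbour of $v$ would also be a neighbour of $v'$ and would glue the two components. Hence $v$ and $v'$ have no remaining neighbours, so both are singleton components, i.e.\ both are leaves. This gives $\tilde T\in\mathcal T_\land(G)$; the converse is clear since two leaves are incomparable.

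If instead $v,v'$ are comparable, say $v'$ is an ancestor of $v$, then (2) also holds for the edge $vv'$, and (1) in $G_v^t$ follows from (1) in $G_v^f$, so $\tilde T\in\R(G_v^t)$. It is not in $\mathcal S(G_v^t)$: if $v,v'$ were adjacent with one a leaf, the subtree of the upper twin would induce a disconnected graph in $G_v^f$.

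Conversely, take $T\in\R(G_v^t)\setminus\mathcal S(G_v^t)$. The only subtree whose connectivity could fail after deleting $vv'$ is the one rooted at the upper twin. If that subtree contains any vertex $w$ besides $v,v'$, connectivity forces some such $w$ to be adjacent to $v$ or $v'$, hence to both, which restores connectivity. It can only fail when the subtree is exactly $\{v,v'\}$, i.e.\ $T\in\mathcal S(G_v^t)$. This proves (a).

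\textbf{Part (b).} We now know the two vertex sets coincide as sets of rooted trees, and the claim is that adjacency coincides. I would show that for a tree in the common set, an $ab$-rotation performed in $G_v^f$ and in $G_v^t$ produces the same rooted tree whenever the result stays in the common set. Rules (i) and (ii) do not depend on the graph. Rule (iii) depends on adjacency to $u$ only through subtrees $S$ of the lower vertex, and the two graphs differ only in the edge $vv'$. So a discrepancy requires $u\in\{v,v'\}$ and $S$ containing the other twin but no other neighbour of $u$. As in (a), this forces $S=\{\text{other twin}\}$, and the two possible results are exactly a tree in $\mathcal S(G_v^t)$ (in $G_v^t$) and a tree in $\mathcal T_\land(G)$ (in $G_v^f$). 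Hence edges between non-excluded trees agree, and the identity map is the isomorphism. I expect this case analysis of rule (iii) to be the main obstacle, especially checking that no rotation between two non-excluded trees passes through the exceptional configuration.

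\textbf{Parts (c) and (d).} For $T_\land\in\mathcal T_\land(G)$, both $v$ and $v'$ are leaves with a common parent $p$: their parent is the deepest common ancestor, which is adjacent to both. Deleting $v'$ gives a search tree $T$ on $G$. Rotations of edges not incident to $v$ or $v'$ commute with deleting $v'$: the leaf $v'$ is reattached together with $v$, by the twin property. Hence if they lead to another tree of $\mathcal T_\land(G)$, the corresponding trees on $G$ are adjacent, which is (c).

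The only remaining rotations are the $pv$- and $pv'$-rotations. In the $pv$ case, $v$ takes $p$'s place and $p$ becomes its child. The leaf $v'$, adjacent to $p$, stays below $p$, and the other subtrees of $p$ stay with $p$. The result has $v$ above $p$ above $v'$ on a path, so it is not in $\mathcal T_\land(G)$. Identifying it with the insertion notation gives $T(d_T(v)-1,v,v')$, or with the roles as in the statement; the symmetric $pv'$-rotation gives $T(d_T(v)-1,v',v)$. Checking the indexing convention of $T(i,x,v)$ here is routine bookkeeping, and it yields (d).
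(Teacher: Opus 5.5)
The paper gives no proof of this lemma; it imports it from \cite{gargantini2026effect}, so there is nothing in the paper to match against. Judged on its own, your argument is correct and self-contained, and it takes a route of its own.

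Everything runs on the intrinsic characterization of search trees: every subtree induces a connected subgraph, and every edge of the graph joins an ancestor--descendant pair. This reduces all four parts to a local check on the single edge $vv'$ where $G_v^t$ and $G_v^f$ differ.
\begin{itemize}
\item In (a), incomparable twins are forced to be sibling leaves. In the other direction, deleting $vv'$ can disconnect the upper twin's subtree only when that subtree is exactly $\{v,v'\}$.
\item In (b), rule (iii) is the only graph-dependent rule. It gives different outcomes in the two graphs only in the configuration where one result lies in $\mathcal S(G_v^t)$ and the other in $\mathcal T_\land(G)$. Hence the identity map is the isomorphism.
\item In (c) and (d), rotations away from the twins commute with deleting $v'$, and the $pv$- and $pv'$-rotations give the two insertion trees.
\end{itemize}
The material the paper does quote around this lemma (the insertions $T(i,x,v)$, the halves $\R(G_v^t,v',v)$, Remark~\ref{rem:border_edges}, Lemma~\ref{lemma_ins_is_ST}) suggests the source works in the insertion framework instead. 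That framework ties the statement directly to the bijection $T_\land\mapsto T$ and the boundary count used in Theorem~\ref{theo_expansion_ft}. Your approach needs none of that machinery, and it makes (a) and (b) transparent.

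A few spots should be written out explicitly.
\begin{itemize}
\item In (a), the parenthetical should read ``minus $u$ and its ancestors''. For the converse, say why only the upper twin's subtree matters: if it stays connected in $G_v^f$, it contains a $v$--$v'$ path, so every larger subtree stays connected too.
\item In (c) you only wrote \emph{if} the other rotations land in $\mathcal T_\land(G)$. The word ``exactly'' in (d) needs the positive statement. A rotation of an edge not incident to $v,v'$ gives neither twin children and moves them together, so they remain sibling leaves and the result stays in $\mathcal T_\land(G)$.
\item In the $pv$-rotation, $v'$ stays below $p$ by rule (ii), not because it is adjacent to $p$.
\item Like the statement, you tacitly need $|G|\ge 2$. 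This makes $G_v^f$ connected and $d_T(v)\ge 1$.
\end{itemize}
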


\begin{theorem}
\label{theo_expansion_ft}
    Let $G$ be a connected graph such that $|G|\geq 2$ and $v\in V(G)$. Then, \[h(\R(G^f_v)) < \displaystyle\frac{2|\R(G)|}{|\R(G^f_v)|-|\R(G)|}.\]
\end{theorem}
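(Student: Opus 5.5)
We mimic the proof of Theorem~\ref{theo_expansion_tt}, transferring the true-twin half to $\R(G_v^f)$ via Lemma~\ref{lemma:structure_of_ft}. Let $S\subseteq V(\R(G_v^f))$ be the image, under the isomorphism of Lemma~\ref{lemma:structure_of_ft}(\ref{item_subgraph_ft}), of $\R(G_v^t,v',v)\setminus\mathcal S(G_v^t)$. These are the trees, other than those in $\mathcal T_\land(G)$, in which $v$ is a descendant of $v'$. By the symmetry $\rho^\ast$ (Remark~\ref{rem:notation}, which also applies to false twins), the complementary half has the same size. Then $V(\R(G_v^f))\setminus \mathcal T_\land(G)$ splits into two halves of equal size, so
\[
|S|=\frac{|\R(G_v^f)|-|\mathcal T_\land(G)|}{2}\le \frac{|\R(G_v^f)|}{2}.
\]
Hence $S\in\Omega$.

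Next we bound $|\partial S|$. Its edges go either to the other half or to $\mathcal T_\land(G)$. For edges to the other half, Remark~\ref{rem:border_edges} says the only edges leaving $\R(G_v^t,v',v)$ are the $vv'$-rotations at the trees $T(d_T(v),v',v)$, one per $T\in\R(G)$. Removing $\mathcal S(G_v^t)$ removes exactly these boundary trees, since $v'$ is then a leaf child of $v$. The edges formerly leaving through them now need to be reinterpreted. By Lemma~\ref{lemma:structure_of_ft}(d), each $T_\land\in\mathcal T_\land(G)$ has exactly one neighbour in $S$, namely $T(d_T(v)-1,v',v)$, and one in the other half. Also, $\mathcal T_\land(G)$ is in bijection with a subset of $\R(G)$ via $T_\land\mapsto T$ (delete $v'$). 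The main check is that no edge of $\R(G_v^f)$ joins $S$ directly to the other half, except possibly one per tree of $\R(G)$ not represented in $\mathcal T_\land(G)$. This will be done by tracking, through the isomorphism, which edges incident to trees of $\R(G_v^t,v',v)\setminus\mathcal S$ could leave it, which again are only $vv'$-rotations. In all cases each $T\in\R(G)$ contributes at most one boundary edge, so $|\partial S|\le|\R(G)|$.

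For the denominator, Lemma~\ref{lemma:structure_of_ft}(c),(d) and the deletion map give $|\mathcal T_\land(G)|\le|\R(G)|$. Therefore
\[
|S|\ge \frac{|\R(G_v^f)|-|\R(G)|}{2}.
\]
Strictness should come from $|G|\ge2$. There is a tree $T\in\R(G)$ in which $v$ is not a leaf, for instance one rooted at $v$, and then no $T_\land$ corresponds to it. This gives $|\mathcal T_\land(G)|<|\R(G)|$ and hence strict inequality. Combining,
\[
h(\R(G_v^f))\le\frac{|\partial S|}{|S|}<\frac{2|\R(G)|}{|\R(G_v^f)|-|\R(G)|}.
\]
The hardest step is the exact boundary count, namely ensuring that $|\partial S|\le|\R(G)|$ survives the passage from the true-twin to the false-twin graph. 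If the direct edges and the edges into $\mathcal T_\land$ could both occur for the same $T$, I would instead add $\mathcal T_\land(G)$ to $S$. Its internal edges stay internal, and by (d) each $T_\land$ then contributes exactly one edge out, while the size bound only improves.
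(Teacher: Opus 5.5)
Your decomposition is the paper's own. Your $S$ is its $R=V(\R(G^t_v,v',v))-\mathcal{S}(G^t_v)$, with the same size $|S|=(|\R(G^f_v)|-|\mathcal{T}_\wedge|)/2$ and the same source of strictness, namely $|\mathcal{T}_\wedge|<|\R(G)|$. You differ from the paper in the boundary count, and your count is the correct one. The paper asserts $|\partial R|=|\mathcal{T}_\wedge|$, i.e.\ that every boundary edge goes into $\mathcal{T}_\wedge$. That misses the $vv'$-rotation edges joining $T(d_T(v),v',v)$ to its image under $\rho^\ast$ when $v$ is not a leaf of $T$. Such trees are not in $\mathcal{S}(G^t_v)$, so by Lemma~\ref{lemma:structure_of_ft}(b) these edges survive in $\R(G^f_v)$.

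Already for $G=K_2$ this fails. Here $\R(G^f_v)=\R(K_{1,2})$ is a $5$-cycle, and $R$ is a pair of adjacent vertices with $|\partial R|=2=|\R(G)|$, while $|\mathcal{T}_\wedge|=1$. The paper's intermediate bound $h\le 2|\mathcal{T}_\wedge|/(|\R(G^f_v)|-|\mathcal{T}_\wedge|)=1/2$ is therefore false, since $h(C_5)=1$. The true count is $|\partial R|=|\R(G)|$. The theorem still holds exactly as you argue, with the slack $|\mathcal{T}_\wedge|<|\R(G)|$ spent in the denominator.

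Two repairs to your write-up are needed.

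First, the sentence saying that removing $\mathcal{S}(G^t_v)$ deletes all the boundary trees ``since $v'$ is then a leaf child of $v$'' is wrong. In $T(d_T(v),v',v)$ the vertex $v'$ is the parent of $v$ and has $v$ as its only child; the wording of Remark~\ref{rem:border_edges} reverses this. So the tree lies in $\mathcal{S}(G^t_v)$ if and only if $v$ is a leaf of $T$. This is exactly what resolves your closing doubt:
\begin{itemize}
\item if $v$ is a leaf of $T$, the direct edge is deleted and $T$ contributes only the edge from its $T_\wedge$;
\item if $v$ is not a leaf of $T$, no $T_\wedge$ corresponds to $T$, and it contributes only its surviving direct edge.
\end{itemize}
Hence $|\partial S|=|\R(G)|$ exactly, and no $T$ contributes both kinds of edge.

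Second, drop the fallback. The set $S\cup\mathcal{T}_\wedge$ has $(|\R(G^f_v)|+|\mathcal{T}_\wedge|)/2>|\R(G^f_v)|/2$ vertices, so it is not in $\Omega$. Its boundary is also that of the other half, which still contains the direct edges.
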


\begin{proof}
By Lemma \ref{lemma:structure_of_ft}(a), the rooted trees in $V(\mathcal{R}(G^t_v))-\mathcal{S}(G^t_v)$ are search trees on $G^f_v$, and together with $\mathcal{T}_\wedge$ they partition $V(\mathcal{R}(G^f_v))$.

Set $R=V(\mathcal{R}(G^t_v,v',v))-\mathcal{S}(G^t_v))$. Note that $R$ and $V(\mathcal{R}(G^t_v,v,v'))-\mathcal{S}(G^t_v))$ induce isomorphic subgraphs of $\mathcal{R}(G^f_v)$, so $|\mathcal{R}(G^f_v)|=2|R|+|\mathcal{T}_\wedge|$.

By Lemma \ref{lemma:structure_of_ft}(d) of~\cite{gargantini2026effect}, every $T_\wedge\in\mathcal{T}_\wedge$ is adjacent in $\mathcal{R}(G^f_v)$ to exactly the two trees $T(d_T(v)-1,v',v)$ and $T(d_T(v)-1,v,v')$, both lying in $V(\mathcal{R}(G^f_v))-\mathcal{T}_\wedge$. Hence every edge in $\partial R$ corresponds to a unique element of $\mathcal{T}_\wedge$, giving $|\partial R|=|\mathcal{T}_\wedge|$.

The assignment $T_\wedge\mapsto T$ (where $T$ is the search tree on $G$ obtained by removing $v'$) is a bijection from $\mathcal{T}_\wedge$ to the set of search trees on $G$ in which $v$ is a leaf. Since $|G|\geq 2$, not every search tree on $G$ has $v$ as a leaf, so $|\mathcal{T}_\wedge|<|\mathcal{R}(G)|$. Therefore,
\[
h(\mathcal{R}(G^f_v))
\leq \frac{|\partial R|}{|R|}
= \frac{2\,|\mathcal{T}_\wedge|}{|\mathcal{R}(G^f_v)| - |\mathcal{T}_\wedge|}
< \frac{2\,|\mathcal{R}(G)|}{|\mathcal{R}(G^f_v)| - |\mathcal{R}(G)|}. 
\]
\end{proof}

For $n\geq 2$, the star $K_{1,n}$ can be obtained from $K_{1,n-1}$ by adding a false twin to any leaf. Before applying Theorem~\ref{theo_expansion_ft}, we establish the recurrence $|\mathcal{R}(K_{1,n})|=n|\mathcal{R}(K_{1,n-1})|+1$. Every search tree on $K_{1,n}$ either has the universal vertex as a non-root vertex — in which case exactly one of the $n$ leaves is the root, and removing it gives a search tree on $K_{1,n-1}$, yielding $n|\mathcal{R}(K_{1,n-1})|$ such trees — or has the universal vertex as root, with all leaves as non-root vertices; the latter is unique. This gives $|\mathcal{R}(K_{1,n})|=n|\mathcal{R}(K_{1,n-1})|+1$.

Substituting $G=K_{1,n-1}$ and $G^f_v=K_{1,n}$ into Theorem \ref{theo_expansion_ft} together with this recurrence gives the following corollary. A stronger version of this result, obtained through a detailed study of stellohedra, is given in Section~\ref{sec:eigenstello}.

\begin{corollary}
\label{coro:stellos}
    Let $n\geq 2$ and let $\lambda_2$ be the second largest eigenvalue of $\R(K_{1,n})$. Then 
    \begin{enumerate}
        \item $h(\R(K_{1,n}))< \frac{2}{n-1}$.
        \item $\lambda_2> n-\frac{4}{n-1}$.
    \end{enumerate}

\end{corollary}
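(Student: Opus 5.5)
We plan to substitute $G=K_{1,n-1}$ and $G^f_v=K_{1,n}$ into Theorem~\ref{theo_expansion_ft}, where $v$ is any leaf of $K_{1,n-1}$. Adding a false twin $v'$ to that leaf gives a vertex adjacent only to the centre, so $G^f_v\cong K_{1,n}$. The hypothesis $|G|\geq 2$ holds because $n\geq 2$. Write $a_{m}=|\R(K_{1,m})|$. Theorem~\ref{theo_expansion_ft} then gives
\[
h(\R(K_{1,n}))<\frac{2a_{n-1}}{a_n-a_{n-1}}.
\]

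Next we use the recurrence $a_n=na_{n-1}+1$ established just before the statement. It gives
\[
a_n-a_{n-1}=(n-1)a_{n-1}+1>(n-1)a_{n-1},
\]
and hence
\[
h(\R(K_{1,n}))<\frac{2a_{n-1}}{(n-1)a_{n-1}}=\frac{2}{n-1}.
\]
This proves item 1. The recurrence needs $n\geq 2$ so that $K_{1,n-1}$ is defined and connected. For $n=2$, $K_{1,1}$ is a single edge with $a_1=2$, and the recurrence gives $a_2=5$, which matches the pentagon $\R(P_3)$.

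For item 2 we use that $\R(K_{1,n})$ is $n$-regular, since $K_{1,n}$ has $n+1$ vertices. The left Cheeger inequality in Theorem~\ref{theo-cheeger} gives $\frac{n-\lambda_2}{2}\leq h(\R(K_{1,n}))<\frac{2}{n-1}$. Rearranging yields $\lambda_2>n-\frac{4}{n-1}$, and the inequality stays strict because the bound on $h$ is strict.

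There is no real obstacle in this argument. The only points needing care are checking that the construction really produces the star, so that Theorem~\ref{theo_expansion_ft} applies, and keeping the strict inequality when passing through the recurrence and through Cheeger's inequality.
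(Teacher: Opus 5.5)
Your proposal is correct and is essentially the paper's own proof: substitute $G=K_{1,n-1}$ and $G^f_v=K_{1,n}$ into Theorem~\ref{theo_expansion_ft}, use $|\R(K_{1,n})|=n|\R(K_{1,n-1})|+1$ to bound the denominator below by $(n-1)|\R(K_{1,n-1})|$, and finish with the left Cheeger inequality for the $n$-regular graph $\R(K_{1,n})$. The only ingredient you take on trust is the statement of Theorem~\ref{theo_expansion_ft}, and that statement is sound. Its written proof undercounts $\partial R$, because it omits the $vv'$-rotation edges between the two halves; the correct count is $|\partial R|=|\R(G)|$, which still yields the same strict bound since $|\mathcal T_\land(G)|<|\R(G)|$.
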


\begin{proof}
Item 1 follows from Theorem \ref{theo_expansion_ft},
\[
    h(\R(K_{1,n}))< \frac{2|\R(K_{1,n-1})|}{|\R(K_{1,n})|-|\R(K_{1,n-1})|} = \frac{2|\R(K_{1,n-1})|}{n|\R(K_{1,n-1})|+1-|\R(K_{1,n-1})|}\leq \frac{2}{n-1}.
    \]
Item 2 follows from Item 1 via Cheeger's inequality \ref{theo-cheeger}.
\end{proof}

Let $G=\operatorname{SPK}_{p,q}$, $G'=\operatorname{SPK}_{p,q+1}$, and let $Q$ be the independent set of size $q+1$ of $G'$. For each $y\in Q$, let $\mc{T}_y$ be the set of search trees in $\mc{R}(G')$ that have $y$ as its root. Notice that $G'-y$ is isomorphic to $G$, and that $|\mc{T}_y|=|\mc{R}(G'-y)|=|\mc{R}(G)|$, as inserting $y$ as a root
of a search tree in $\mc{R}(G'-y)$ gives a search tree in $\mc{T}_y$, and vice versa. 
Therefore, 
\[
|\mc{R}(G')|\geq \sum_{y\in Q}|\mc{T}_y|=(q+1)|\mc{R}(G)|.
\]
Theorem \ref{theo_expansion_ft} then gives the following corollary.

\begin{corollary}
Let $p,q\in\mathbb N$ and let $\lambda_2$ be the second largest eigenvalue of $\R(\SPK_{p,q+1})$. Then
\begin{enumerate}[(a)]
    \item $h(\R(\SPK_{p,q+1}))< \displaystyle\frac{2}{q}$, and 
    \item $\lambda_2>p+q-\frac{4}{q}$.
\end{enumerate}
\end{corollary}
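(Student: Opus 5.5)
We apply Theorem~\ref{theo_expansion_ft} with $G=\SPK_{p,q}$ and $G^f_v=\SPK_{p,q+1}$. Here $v$ is any vertex of the independent set of $G$, and its false twin $v'$ is the new independent vertex. Two independent vertices have the same open neighbourhood, namely the clique, so they are false twins and $G^f_v\cong \SPK_{p,q+1}$. The hypothesis $|G|\ge 2$ holds, provided we read $\SPK_{p,q}$ as having at least one vertex in each part so that $v$ exists. (If $q=0$ is allowed, the bound $2/q$ is vacuous anyway.) Theorem~\ref{theo_expansion_ft} then gives
\[
h(\R(\SPK_{p,q+1}))<\frac{2|\R(G)|}{|\R(G')|-|\R(G)|}.
\]

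Next we insert the counting estimate $|\R(G')|\ge (q+1)|\R(G)|$ established just before the statement. It comes from partitioning by the root: the trees rooted at the $q+1$ independent vertices are pairwise distinct, and each family is in bijection with $\R(G)$. This yields $|\R(G')|-|\R(G)|\ge q|\R(G)|$, hence
\[
h(\R(\SPK_{p,q+1}))<\frac{2|\R(G)|}{q|\R(G)|}=\frac{2}{q}.
\]
The strict inequality is inherited from Theorem~\ref{theo_expansion_ft}, which proves (a).

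For (b) we use the left Cheeger inequality (Theorem~\ref{theo-cheeger}). The graph $\SPK_{p,q+1}$ has $p+q+1$ vertices, so $\R(\SPK_{p,q+1})$ is $d$-regular with $d=p+q$. Cheeger gives $\frac{d-\lambda_2}{2}\le h<\frac{2}{q}$, so $\lambda_2>p+q-\frac{4}{q}$.

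The only step that needs care is checking the twin hypothesis and the regularity degree. Once the vertex count $p+q+1$ is confirmed, the rest is immediate substitution.
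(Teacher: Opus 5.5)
Your proposal is correct and follows the paper's route: apply Theorem~\ref{theo_expansion_ft} with $G=\SPK_{p,q}$ and a false twin of an independent vertex, use the root-counting bound $|\R(\SPK_{p,q+1})|\ge (q+1)|\R(\SPK_{p,q})|$, and then apply the left Cheeger inequality with $d=p+q$ to get (b). You also spell out two things the paper leaves implicit, namely the false-twin check and the requirement $p,q\ge 1$.
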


An argument similar to the one used for $\operatorname{SPK}_{p,q}$ and $\operatorname{SPK}_{p,q+1}$, can be used for $K_{p,q}$ and $K_{p,q+1}$, since two vertices from the maximal independent set of $K_{p,q+1}$ are false twins.
In particular this implies that, as $q\to\infty$ with $p$ fixed, the gap $\lambda_1-\lambda_2$ between the first and second eigenvalues of $\R(K_{p,q})$ goes to $0$.

\section{On the eigenvalues of stellohedra}\label{sec:stello}

This section is dedicated to present some progress regarding the spectrum of stellohedra. We begin by determining the characteristic polynomials related to an equitable partition of the vertices of stellohedra. This approach enables us to establish the presence of at least one eigenvalue of the adjacency matrix of the $n$-regular stellohedron within the interval $(n-i, n-i+1]$ for $1 \leq i \leq 5$. We also give two smaller eigenvalues of the stellohedron.

\subsection{Some structural properties of stellohedra}

Stellohedra exhibit a symmetric structure, which we describe in this section. We make use of this symmetry to study their characteristic polynomial via equitable partitions.

Let $n > 0$ be an integer and let $K_{1,n}$ denote the star graph with $n$ leaves. We denote by $\{x\}\cup[n]$ the vertex set of $K_{1,n}$, where $x$ denotes its only cut vertex and the leaves have labels on $[n]$. For simplicity, we denote by $\mathcal{S}_n=\mathcal{R}(K_{1,n})$. 

It is well known that all search trees on $K_{1,n}$ are brooms (see \cite{CPV-2024}). Let $k$ be an integer with $0 \leq k \leq n$. We denote by $N_k$ the subset of vertices in $\mathcal{S}_n$ where its corresponding brooms have a handle of length equal to $k$ (see Figure \ref{figroot1new}(a) for an example).

\begin{figure}[htp]
 \centering
 \includegraphics[scale=.3]{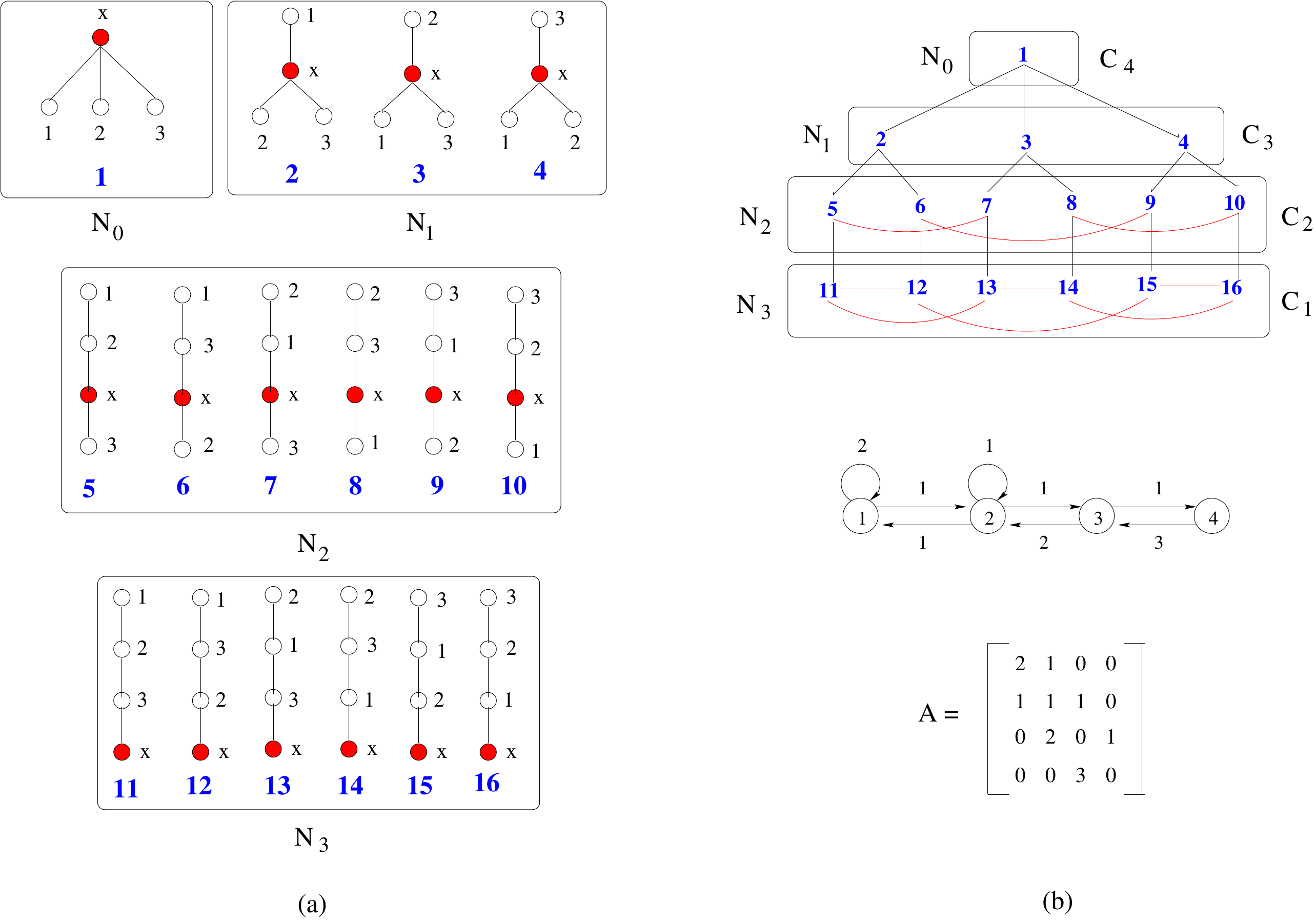}
  \caption{$(a)$ Subsets $N_k$ of $\mathcal{S}_3$, for $0 \leq k \leq 3$. $(b)$ The $1$-skeleton of $\mathcal{S}_3$ with partition $\pi = C_1,\ldots,C_4$ in the top, the quotient graph $\mathcal{S}_3/\pi$ in the middle, and its adjacency matrix $A = A(\mathcal{S}_3/\pi)$ in the bottom.}
 \label{figroot1new}
\end{figure} 

\begin{remark}
\label{obsroot1}
The sets $N_k$, with $0 \leq k \leq n$, have the following properties:
\begin{enumerate}
\item The subgraphs induced by $N_0$ and $N_1$ are both $0$-regular.
\item For $2 \leq k \leq n$, the subgraphs induced by the sets $N_k$ are $(k-1)$-regular.
\item For $0\leq k< n$, each vertex in $N_k$ has exactly $n-k$ neighbors in the set $N_{k+1}$.
\item For $0 < k \leq n$, each vertex in $N_k$ has exactly one neighbor in the set $N_{k-1}$.
\end{enumerate}
\end{remark}

Let $G$ be a graph. Let $\pi$ be a partition of the vertices of $G$ and let $C_1,\ldots,C_r$ be the cells of this partition. This partition is {\em equitable} if the number of vertices in cell $C_j$ that are adjacent to a vertex in $C_i$ is determined only by $i$ and $j$, and not by the specific vertex from $C_i$ used. If $\pi$ is an equitable partition of $G$, the {\em quotient graph} $G/\pi$ is defined as the directed multi-graph with the cells of $\pi$ as its vertices, and if a vertex in $C_i$ has exactly $\nu$ vertices in $C_j$, then $G/\pi$ has $\nu$ arcs from $C_i$ to $C_j$. Notice that if $G$ is a regular graph, then the induced subgraph by each cell $C_i$ must also be regular. In general $G/\pi$ will have loops and parallel arcs. 

The following remark follows directly from Remark \ref{obsroot1}.
\begin{remark}
\label{obsroot2}
Let $n >0$ and let $0 \leq k \leq n$. The partition $\pi = C_1,\ldots,C_{n+1}$ of $\mathcal{S}_n$, where $C_i = N_{n-i+1}$, for $1 \leq i \leq n+1$, is an equitable partition of $\mathcal{S}_n$.
\end{remark}

Figure \ref{figroot1new}(b) gives an example of the equitable partition $\pi$ of $\mathcal{S}_3$ and of the corresponding quotient multi-graph $\mathcal{S}_3/\pi$. The following result is given in \cite{GM-2016}.
\begin{lemma}[Lemma $2.2.2$ and Corollary $2.2.3$ in \cite{GM-2016}]
\label{lemaroot1}
If $\pi$ is an equitable partition of the vertices of a graph $G$, the characteristic polynomial of $G/\pi$ divides the characteristic polynomial of $G$. Consequently, the eigenvalues of $G/\pi$ are eigenvalues of $G$.
\end{lemma}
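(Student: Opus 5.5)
The plan is to use the characteristic matrix of the partition. Label the vertices of $G$ as $1,\ldots,m$ and let $\pi=C_1,\ldots,C_r$. Define the $m\times r$ matrix $P$ by $P_{uj}=1$ if $u\in C_j$ and $0$ otherwise. Let $B$ be the $r\times r$ adjacency matrix of the quotient $G/\pi$, so that $B_{ij}=\nu_{ij}$ is the number of neighbours in $C_j$ of any vertex of $C_i$.

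The first step is the identity $AP=PB$, where $A=A_G$. To check it, compare the $(u,j)$ entries with $u\in C_i$. The left side $(AP)_{uj}$ counts the neighbours of $u$ lying in $C_j$. The right side is $(PB)_{uj}=B_{ij}=\nu_{ij}$. These agree exactly because $\pi$ is equitable. Note also that the columns of $P$ have disjoint nonempty supports, so they are linearly independent and $P$ has full column rank $r$.

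The second step reads the identity $AP=PB$ as follows. The column space $W$ of $P$ is $A$-invariant, and in the basis given by the columns of $P$, the restriction $A|_W$ has matrix $B$.

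\begin{itemize}
\item Extend that basis of $W$ to a basis of $\mathbb{R}^m$.
\item In the new basis, $A$ becomes block upper triangular, with diagonal blocks $B$ and some $(m-r)\times(m-r)$ matrix $C$.
\item Hence $\det(xI-A)=\det(xI-B)\det(xI-C)$, so $\phi(G/\pi,x)$ divides $\phi(G,x)$.
\end{itemize}

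The eigenvalue consequence is immediate from the divisibility. It can also be seen directly: if $Bw=\theta w$ with $w\neq 0$, then $A(Pw)=PBw=\theta Pw$, and $Pw\neq 0$ by full column rank.

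The only step needing care is the block-triangular argument, i.e. that invariance of $W$ yields the factorization. This is standard linear algebra. Since $G$ here is undirected, $A$ is symmetric, so one could alternatively take $W^\perp$, which is also invariant. That gives a block-diagonal form and avoids the general triangular argument entirely.
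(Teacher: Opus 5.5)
Your argument is correct and follows essentially the same route as the standard proof in the source the paper cites from Godsil--Meagher; the paper itself only quotes the result without proving it. In that proof, the characteristic matrix $P$ satisfies $AP=PB$, so its column space is $A$-invariant with $B$ representing the restriction, and completing to a basis gives $\det(xI-A)=\det(xI-B)\det(xI-C)$; your remarks on the full column rank of $P$ and the block-diagonal variant via $W^\perp$ are also sound.
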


\subsection{Characteristic polynomial of $\mathcal{S}_n/\pi$}
Let $n>0$ and let $\pi = C_1,\ldots,C_{n+1}$ the equitable partition of $\mathcal{S}_n$ computed in Remark \ref{obsroot2}. Notice that the adjacency matrix of $\mathcal{S}_n/\pi$ is a tridiagonal $(n+1)\times (n+1)$ matrix of the form

\[
A = A(\mathcal{S}_n/\pi) = 
   \begin{pmatrix}
    n-1 &  1  &   0    &   0     & \dots &   &  0   \\
     1  & n-2 &   1    &   0     &       &   &      \\
        &  2  & \ddots & \ddots  &       &   & \vdots\\
        &     & \ddots &   2     &   1   &   &       \\
        &     &        &  n-2    &   1   & 1 &       \\
 \vdots &     &        &         &  n-1  & 0 &  1    \\
      0 &     & \dots  &         &       & n &  0
  \end{pmatrix}
\]

In order to compute the eigendecomposition of $A$ we will use a similarity transformation to construct a symmetric tridiagonal matrix $B = D^{-1}AD$, with $D := \text{diag}(\delta_1,\delta_2,\ldots,\delta_{n+1})$, where $\delta_i = \sqrt{(i-1)!}$ for $i = 1, 2,\ldots,n+1$. Clearly, $A$ and $B$ have the same characteristic polynomial and so the same eigenvalues:

\begin{center}
$
B = 
   \begin{pmatrix}
    n-1 &  1       &   0       &   0        & \dots      &            &  0   \\
     1  & n-2      & \sqrt{2}  &   0        &            &            &      \\
        & \sqrt{2} & \ddots    & \ddots     &            &            & \vdots\\
        &          & \ddots    &   2        & \sqrt{n-2} &            &       \\
        &          &           & \sqrt{n-2} &   1        & \sqrt{n-1} &       \\
 \vdots &          &           &            & \sqrt{n-1} &   0        & \sqrt{n}\\
      0 &          & \dots     &            &            & \sqrt{n}   &  0
  \end{pmatrix}
$
\end{center}

Now, in order to compute the characteristic polynomial of $B$, we need to compute $\text{det}(B - \lambda I_{n+1})$. For $1 \leq k \leq n+1$, let $B_k$ be the principal $k \times k$ submatrix of $B$ formed by the intersection of the first $k$ columns and the first $k$ rows of $B$. Let $f_k(\lambda)$ be the characteristic polynomial of $B_k$, that is, $f_k(\lambda) = \text{det}(B_k - \lambda I_k)$. Clearly, the characteristic polynomial of $B$ is the polynomial $f_{n+1}(\lambda)$. Notice that if we expand the determinant of $B - \lambda I_{n+1}$ by minors, using the last row, and then expanding the last column of one of the reduced determinants, we obtain:
\begin{equation}
f_{n+1}(\lambda) = (-\lambda)f_n(\lambda)- nf_{n-1}(\lambda).
\end{equation}

By performing the same procedure as in equation $(1)$, we obtain a recursive relationship for the polynomials $f_k(\lambda)$, for $0 \leq k \leq n$, where we introduce $f_0(\lambda) = 1$. Then,
\begin{align}
\label{eqroot2}
\begin{split}
f_0(\lambda) & = 1,\\
f_1(\lambda) & = n-1-\lambda,\\
f_k(\lambda) & = (n-k-\lambda)f_{k-1}(\lambda) - (k-1)f_{k-2}(\lambda), \text{ for $2\leq k \leq n$.}
\end{split}
\end{align}

\begin{theorem}
\label{teoroot1}
Let $n>0$ and let $0 \leq k \leq n$. Then, the polynomial $f_k(\lambda)$ in the recurrence equation $(2)$ has the closed form:
$$
f_k(\lambda) = \sum_{p=0}^k \binom{k}{p}(-1)^p\frac{(n-\lambda)!}{(n-\lambda-k+p)!}.
$$
\end{theorem}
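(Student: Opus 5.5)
We prove the closed form by induction on $k$, verifying that the right-hand side satisfies the recurrence~(\ref{eqroot2}) together with the two initial conditions. Write $m=n-\lambda$ and use the falling factorial $(m)_j = \frac{m!}{(m-j)!} = m(m-1)\cdots(m-j+1)$. This is a polynomial in $\lambda$, so the identity is a polynomial identity and the factorial notation is only formal. The claimed formula then reads
\[
g_k := \sum_{p=0}^k \binom{k}{p}(-1)^p (m)_{k-p}.
\]
The base cases are immediate: $g_0=1$, and $g_1=(m)_1-1=n-1-\lambda=f_1$.

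For the inductive step it suffices to prove $g_k=(m-k)g_{k-1}-(k-1)g_{k-2}$ for $k\ge 2$, since $n-k-\lambda=m-k$. Expand $(m-k)g_{k-1}$ term by term using
\[
(m-k)(m)_{j}=(m)_{j+1}-(k-j)(m)_{j}.
\]
With $j=k-1-p$ this gives $(m-k)(m)_{k-1-p}=(m)_{k-p}-(p+1)(m)_{k-1-p}$. Hence
\[
(m-k)g_{k-1}=\sum_{p}\binom{k-1}{p}(-1)^p(m)_{k-p}-\sum_p (p+1)\binom{k-1}{p}(-1)^p(m)_{k-1-p}.
\]
Reindex the second sum with $p\to p-1$, so that it becomes $\sum_p p\binom{k-1}{p-1}(-1)^{p}(m)_{k-p}$, with the sign flipped to plus. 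We then collect coefficients of $(m)_{k-p}$ and compare with $g_k+(k-1)g_{k-2}$, where $(k-1)g_{k-2}=\sum_p (k-1)\binom{k-2}{p-2}(-1)^p(m)_{k-p}$ after a shift $p\to p-2$.

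The required coefficient identity is
\[
\binom{k-1}{p}+p\binom{k-1}{p-1}=\binom{k}{p}+(k-1)\binom{k-2}{p-2}.
\]
To check it, subtract Pascal's rule, which leaves $(p-1)\binom{k-1}{p-1}=(k-1)\binom{k-2}{p-2}$. This is the absorption identity.

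The main obstacle is the bookkeeping in this coefficient comparison: getting the index shifts and signs right, and handling the boundary terms $p=0,1,k$ where some binomials vanish. Using the convention $\binom{a}{b}=0$ for $b<0$ or $b>a$ handles these uniformly. One also has to be careful about the identity used for $(m-k)(m)_j$, namely $(m-j)(m)_j=(m)_{j+1}$ combined with $m-k=(m-j)-(k-j)$. With those checks done, induction completes the proof for all $0\le k\le n$. The recurrence~(\ref{eqroot2}) is stated for $k\le n$, and the same argument also covers $k=n+1$ through equation~(1).
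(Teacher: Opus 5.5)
Your proof is correct for $0\le k\le n$, but it takes a different route from the paper. The paper derives the formula rather than verifying it. It encodes recurrence~(2) in the exponential generating function $F(x)=\sum_k f_k(\lambda)x^k/k!$ and turns the recurrence into the first-order equation $(1+x)F'(x)=(n-1-\lambda-x)F(x)$. Solving gives $F(x)=e^{-x}(1+x)^{n-\lambda}$, and the coefficients are then read off as the binomial convolution of $(-1)^k$ with the falling factorials of $n-\lambda$.

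You instead take the closed form as given and check the recurrence coefficient by coefficient, using $(m-k)(m)_j=(m)_{j+1}-(k-j)(m)_j$, Pascal's rule and absorption. Your index shifts, signs and boundary cases $p=0,1,k$ are all right. Your argument is finite and elementary: it needs no formal power series with the non-integer exponent $n-\lambda$ and no uniqueness argument for the differential equation. It also uses the recurrence only for $2\le k\le n$, whereas the paper's generating function implicitly uses it for every $k\ge 2$. The paper's approach, in return, explains where the closed form comes from, which a verification cannot.

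One correction to your last sentence: the argument does not extend to $k=n+1$. Equation~(1) reads $f_{n+1}=-\lambda f_n-nf_{n-1}=(m-n)f_n-nf_{n-1}$, because the last diagonal entry of $B$ is $0$, not $-1$. Your inductive step would need the coefficient $m-k=m-n-1$. Writing $g_{n+1}$ for the closed form evaluated at $k=n+1$, you actually get $f_{n+1}=g_{n+1}+g_n$, not $g_{n+1}$. For example, with $n=1$ the closed form gives $\lambda^2+\lambda-1$, while $f_2(\lambda)=\lambda^2-1$. This is why the paper states a separate formula for $f_{n+1}$ in Theorem~\ref{teoroot2}. The error does not affect the statement you were asked to prove.
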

\begin{proof}
Let $F(x) = \displaystyle\sum_{k\geq 0}f_k(\lambda)\frac{x^k}{k!}$ be the exponential generating function associated with recurrence $(2)$, and let $F'(x) = \displaystyle\sum_{k \geq 0}f_{k+1}(\lambda)\frac{x^k}{k!}$ its first derivative. Then, equation $(2)$ can be expressed as a formal power series as follows: 
\begin{align*}
\sum_{k\geq 0}f_{k+2}(\lambda)\frac{x^{k+1}}{(k+1)!} &= (n - 1 -\lambda)\sum_{k\geq 0}f_{k+1}(\lambda)\frac{x^{k+1}}{(k+1)!} - \sum_{k\geq 0}f_{k+1}(\lambda)\frac{x^{k+1}}{k!} - \sum_{k\geq 0}f_k(\lambda)\frac{x^{k+1}}{k!}\\
F'(x)-f_1(\lambda) &= (n-1-\lambda)[F(x) - f_0(\lambda)] - xF'(x) - xF(x)\\
F'(x)[x+1] &= [(n-1-\lambda) -x]F(x)\\
F'(x) &= \left[\frac{n-\lambda}{x+1} - 1\right]F(x)
\end{align*}
The general solution of this linear homogeneous differential equation is $F(x) = e^P$, where $P = \int \left[\frac{n-\lambda}{x+1} - 1\right]\,dx$. Therefore, $F(x) = e^{-x}(x+1)^{n-\lambda}$. Now, let $G(x) = \displaystyle\sum_{k\geq 0}g_k\frac{x^k}{k!}$ and let $H(x) = \displaystyle\sum_{k\geq 0}h_k\frac{x^k}{k!}$ two power series. It is well known that the product $G(x)H(x) = \displaystyle\sum_{k \geq 0}\left(\sum_{p=0}^k \binom{k}{p}g_ph_{k-p}\right)\frac{x^k}{k!}$. Therefore, setting $G(x) = e^{-x} = \displaystyle\sum_{k\geq 0}(-1)^k\frac{x^k}{k!}$ and $H(x) = (x+1)^{n-\lambda} = \displaystyle\sum_{k\geq 0}\binom{n-\lambda}{k}x^k = \sum_{k\geq 0} \frac{(n-\lambda)!}{(n-\lambda - k)!}\frac{x^k}{k!}$, the result holds. 
\end{proof}

Finally, using equation $(1)$ and Theorem \ref{teoroot1}, we can easily compute the characteristic polynomial of matrix $B$, that is, the polynomial $f_{n+1}(\lambda)$ as follows:

\begin{theorem}
\label{teoroot2}
Let $n>0$. Then, the polynomial $f_{n+1}(\lambda)$ in the recurrence equation $(1)$ has the closed form:
$$
f_{n+1}(\lambda) = \sum_{p=0}^n(-1)^p\binom{n}{p}(p-\lambda)\frac{(n-\lambda)!}{(p-\lambda)!}.
$$
\end{theorem}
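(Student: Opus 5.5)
The plan is to combine equation $(1)$, $f_{n+1}(\lambda) = -\lambda f_n(\lambda) - n f_{n-1}(\lambda)$, with the closed form of Theorem~\ref{teoroot1} for $k=n$ and $k=n-1$. Throughout, $\frac{(n-\lambda)!}{(m-\lambda)!}$ denotes the falling factorial $(n-\lambda)(n-\lambda-1)\cdots(m-\lambda+1)$, a polynomial in $\lambda$. In this notation the closed form reads
\[
f_n(\lambda)=\sum_{p=0}^n(-1)^p\binom{n}{p}\frac{(n-\lambda)!}{(p-\lambda)!},\qquad
f_{n-1}(\lambda)=\sum_{p=0}^{n-1}(-1)^p\binom{n-1}{p}\frac{(n-\lambda)!}{(p+1-\lambda)!}.
\]

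First I reindex the second sum with $q=p+1$, which gives $n f_{n-1}(\lambda)=\sum_{q=1}^{n}(-1)^{q-1}\,n\binom{n-1}{q-1}\frac{(n-\lambda)!}{(q-\lambda)!}$. Then I use $n\binom{n-1}{q-1}=q\binom{n}{q}$, so that $-n f_{n-1}(\lambda)=\sum_{q=0}^{n}(-1)^q\binom{n}{q}\,q\,\frac{(n-\lambda)!}{(q-\lambda)!}$. The $q=0$ term vanishes, so the range can be extended to include it.

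Adding $-\lambda f_n(\lambda)=\sum_{p}(-1)^p\binom{n}{p}(-\lambda)\frac{(n-\lambda)!}{(p-\lambda)!}$ and combining termwise with the previous sum gives $\sum_{p=0}^n(-1)^p\binom{n}{p}(p-\lambda)\frac{(n-\lambda)!}{(p-\lambda)!}$, which is the claimed formula.

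The only delicate point is bookkeeping. One must check that equation $(1)$ holds with the $f_{n-1}$ coming from recurrence $(2)$, including the small cases $n=1$ where $f_0=1$. One must also confirm that the closed form of Theorem~\ref{teoroot1} is valid at $k=n-1$ and $k=n$. The main obstacle is keeping the falling-factorial index shift consistent: it requires $\frac{(n-\lambda)!}{(n-\lambda-(n-1)+p)!}=\frac{(n-\lambda)!}{(p+1-\lambda)!}$. For $n=1$, I would verify the formula directly: it gives $-\lambda(1-\lambda)-(1-\lambda)(\ldots)$, and this should be checked against $\lambda^2-1$.
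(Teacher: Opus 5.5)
Your proposal is correct and follows the paper's route: the paper obtains the formula by substituting the closed form of Theorem~\ref{teoroot1} for $k=n$ and $k=n-1$ into equation $(1)$, and your reindexing together with $n\binom{n-1}{q-1}=q\binom{n}{q}$ supplies exactly the details the paper leaves implicit. The separate $n=1$ check is not needed, since Theorem~\ref{teoroot1} at $k=0$ already gives $f_0=1$ and the general computation covers that case; completed, your check reads $-\lambda(1-\lambda)-(1-\lambda)=\lambda^2-1$.
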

As a direct corollary of Theorem \ref{teoroot1} and \ref{teoroot2} we have the following result.

\begin{corollary}
\label{cororoot1}
Let $n>0$ and let $0 \leq k \leq n+1$. Then, the polynomials $f_k(\lambda)$ can be expressed as:
\begin{align*}
f_0(\lambda) &= 1\\
f_k(\lambda) &= (-1)^k + \sum_{p=0}^{k-1}(-1)^p\binom{k}{p}\prod_{t=0}^{k-(p+1)}(n-\lambda -t), \hspace{1.0cm}\text{for $1 \leq k \leq n$}\\
f_{n+1}(\lambda) &= \sum_{p=0}^{n}(-1)^p\binom{n}{p}\prod_{t=0}^{n-p}(n-\lambda -t)
\end{align*}
\end{corollary}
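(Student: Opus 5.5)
The plan is to read off all three formulas from the closed forms in Theorem~\ref{teoroot1} and Theorem~\ref{teoroot2}. The only work is to rewrite the quotients of ``factorials'' as finite products. Throughout, for a non-integer argument, $\frac{(n-\lambda)!}{(n-\lambda-m)!}$ is to be understood as the falling factorial $(n-\lambda)(n-\lambda-1)\cdots(n-\lambda-m+1)$, with the empty product equal to $1$ when $m=0$. This is exactly how it arises in the proof of Theorem~\ref{teoroot1}, from the coefficients of $(x+1)^{n-\lambda}=\sum_m \binom{n-\lambda}{m}x^m$. With this convention all identities are identities of polynomials in $\lambda$, so no Gamma-function issues arise.

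The case $f_0(\lambda)=1$ is the initial condition of recurrence~(\ref{eqroot2}), and it also agrees with Theorem~\ref{teoroot1} at $k=0$.

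For $1\le k\le n$, I start from Theorem~\ref{teoroot1} and set $m=k-p$. This gives
\[\frac{(n-\lambda)!}{(n-\lambda-k+p)!}=\prod_{t=0}^{k-p-1}(n-\lambda-t).\]
I then split off the term $p=k$. There the product is empty, so that term equals $\binom{k}{k}(-1)^k=(-1)^k$. The remaining terms, $0\le p\le k-1$, give exactly
\[\sum_{p=0}^{k-1}(-1)^p\binom{k}{p}\prod_{t=0}^{k-(p+1)}(n-\lambda-t).\]

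For $f_{n+1}$, I use Theorem~\ref{teoroot2} and absorb the factor $(p-\lambda)$ into the quotient:
\[(p-\lambda)\frac{(n-\lambda)!}{(p-\lambda)!}=\frac{(n-\lambda)!}{(p-\lambda-1)!}=(n-\lambda)(n-\lambda-1)\cdots(p-\lambda).\]
This product has $n-p+1$ factors, so it equals $\prod_{t=0}^{n-p}(n-\lambda-t)$, and summing over $p$ gives the stated formula.

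No step is a real obstacle. The one point that needs care is the convention that these factorial quotients of non-integers mean falling factorials, so that the empty product equals $1$.
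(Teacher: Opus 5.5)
Your proposal is correct and follows the paper, which presents this statement simply as a direct corollary of Theorems~\ref{teoroot1} and~\ref{teoroot2}. You rewrite each factorial quotient as the falling factorial $\prod_{t=0}^{k-p-1}(n-\lambda-t)$, split off the $p=k$ term to obtain $(-1)^k$, and absorb the factor $(p-\lambda)$ into the product for $f_{n+1}$, all under the falling-factorial reading that the generating-function proof of Theorem~\ref{teoroot1} implicitly uses.
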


\subsection{Localizing eigenvalues of $\mathcal{S}_n/\pi$}\label{LocEigen}

In this section, we will use the formulas for the polynomials $f_k(\lambda)$ given in Corollary \ref{cororoot1}. First, notice that $f_{n+1}(n) = 0$ and so, $n$ is an eigenvalue of the adjacency matrix of $\mathcal{S}_n/\pi$. In fact, $\mathcal{S}_n$ is a $n$-regular graph and so $n$ is its maximum eigenvalue. 
Notice that the matrix $B$ is a real tridiagonal symmetric matrix where the entries of the lower (and upper) diagonal are all different from $0$. This kind of matrices has a special property: the sequence $\{f_0(\lambda), f_1(\lambda),\ldots,f_{n+1}(\lambda)\}$ forms a Sturm sequence of polynomials. Let $\lambda = b$ and let $\{\text{sign}(f_0(b)), \text{sign}(f_1(b)),\dots,\text{sign}(f_{n+1}(b))\}$ be the sequence of signs of each polynomial $f_k$ evaluated at point $b$, $0 \leq k \leq n+1$, where if some $f_k(b) = 0$, then choose the sign of $f_k(b)$ to be opposite to that of $f_{k-1}(b)$ (it can be shown that $f_k(b) = 0$ implies $f_{k-1}(b)\neq0$). Moreover, let $s(b)$ denote the number of agreements of sign between consecutive members of the sign sequence. Then,
\begin{theorem}[See Chapter VII in \cite{Usp-1948}]
\label{teoroot3}
The number of roots greater than $\lambda = a$ is given by $s(a)$. For $a < b$, the number of roots in the interval $a < \lambda \leq b$ is given by $s(a) - s(b)$.
\end{theorem}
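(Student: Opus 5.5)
The plan is to derive the statement from the strict interlacing of eigenvalues of the nested principal submatrices $B_1\subset B_2\subset\cdots\subset B_{n+1}$. Since $B$ is real symmetric tridiagonal with all off-diagonal entries nonzero, each $B_k$ is unreduced. Hence each $f_k(\lambda)=\det(B_k-\lambda I_k)=\prod_{j=1}^k(\mu^{(k)}_j-\lambda)$ has $k$ real, simple roots $\mu^{(k)}_1>\cdots>\mu^{(k)}_k$. I will prove the strict interlacing
\[
\mu^{(k)}_1>\mu^{(k-1)}_1>\mu^{(k)}_2>\cdots>\mu^{(k-1)}_{k-1}>\mu^{(k)}_k
\]
either by Cauchy interlacing plus strictness, or directly by induction from the three-term recurrence $(2)$ (and $(1)$ at the last step). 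The strictness comes from the recurrence: if $f_k(b)=f_{k-1}(b)=0$, then $f_{k-2}(b)=0$ because the coefficient $(k-1)$ (resp.\ $n$) is nonzero, and descending gives $f_0(b)=0$, which is absurd. This same argument justifies the paper's claim that $f_k(b)=0$ forces $f_{k-1}(b)\neq 0$.

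Next, fix $a$ and let $g_k(a)$ be the number of roots of $f_k$ strictly greater than $a$, with $g_0=0$. Interlacing gives $g_k-g_{k-1}\in\{0,1\}$. The key claim is that consecutive signs of $f_{k-1}(a)$ and $f_k(a)$ agree exactly when $g_k=g_{k-1}+1$. Summing over $k$ then gives $s(a)=g_{n+1}(a)$, which is the first assertion. The second follows by subtraction: $s(a)-s(b)=g_{n+1}(a)-g_{n+1}(b)$ counts the roots in $(a,b]$.

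To prove the claim when neither $f_{k-1}(a)$ nor $f_k(a)$ vanishes, I use the product form. The sign of $f_k(a)$ is $(-1)^{k-g_k}$ and the sign of $f_{k-1}(a)$ is $(-1)^{k-1-g_{k-1}}$. These agree iff $g_k-g_{k-1}$ is odd, i.e.\ iff it equals $1$.

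The main obstacle is the zero conventions, which I handle in two cases.

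\textbf{Case $f_k(a)=0$.} The convention assigns $f_k(a)$ the sign opposite to $f_{k-1}(a)$, so there is a disagreement at step $k$. Correspondingly $a=\mu^{(k)}_j$ for some $j$, and interlacing shows $g_k=j-1=g_{k-1}$, so there is no increment, consistent with the claim.

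\textbf{Step $k+1$ after such a zero.} The recurrence gives $f_{k+1}(a)=-c\,f_{k-1}(a)$ with $c>0$. So the true sign of $f_{k+1}(a)$ equals the conventional sign assigned to $f_k(a)$: an agreement. Interlacing shows $g_{k+1}=g_k+1$, since exactly $j$ roots of $f_{k+1}$ lie above $\mu^{(k)}_j=a$. This is again consistent. Also $f_{k+1}(a)\neq0$ by the nonvanishing shown above, so such zeros never occur consecutively.

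These two checks, plus the generic case, complete the argument.
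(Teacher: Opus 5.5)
Your proof is correct, but it does not follow the paper, which gives no argument of its own and simply cites the classical Sturm theorem (Uspensky, Ch.~VII). The classical proof is dynamic. One verifies that the sequence is a Sturm chain: adjacent members never vanish simultaneously, a vanishing intermediate member has neighbours of opposite signs, and a sign condition on $f_nf_{n+1}$ holds near roots of $f_{n+1}$. One then tracks how the sign pattern changes as $\lambda$ moves along the real line. You instead fix $a$ and use strict Cauchy interlacing of the nested leading submatrices $B_{k-1}$, $B_k$ to show that step $k$ produces a sign agreement exactly when the number of eigenvalues above $a$ increases by one. Telescoping then gives $s(a)=g_{n+1}(a)$; this is the standard argument behind Givens' bisection method for symmetric tridiagonal matrices. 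Your route is self-contained and matches the exact formulation the paper uses: agreements (not variations) for $f_k=\det(B_k-\lambda I_k)$, with the ``opposite sign to $f_{k-1}$'' convention at zeros. You check this at every real $a$, including roots of intermediate $f_k$ and of $f_{n+1}$ itself. It also proves the paper's unproved aside that $f_k(b)=0$ forces $f_{k-1}(b)\neq 0$, which in the classical route is one of the Sturm-chain hypotheses that must be verified before the citation applies. In exchange, the classical route needs no matrix structure and applies to any Sturm chain. Yours relies essentially on symmetry (real spectra and interlacing), which is legitimate here because $B=D^{-1}AD$ is symmetric with nonzero off-diagonal entries.
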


By Theorem \ref{teoroot3}, we are able to localize the first five largest eigenvalues of $\mathcal{S}_n/\pi$ as follows.

\begin{theorem}
\label{lemaroot2}
Let $n>1$ and let $\beta_i$ be the $i$th largest eigenvalue of the adjacency matrix $A(\mathcal{S}_n/\pi)$, with $1 \leq i \leq 5$. Then, $\beta_i \in (n-i,n-i+1]$.
\end{theorem}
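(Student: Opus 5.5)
We will apply the Sturm sequence count of Theorem~\ref{teoroot3} to the sequence $f_0,\dots,f_{n+1}$, evaluated at the integer points $\lambda=n-i$ for $0\le i\le 5$. The aim is to show $s(n-i)=i$ for $0\le i\le 5$ (when $n$ is large enough that these points are meaningful; small $n$ are checked separately). Then $s(n-i)-s(n-i+1)=1$, so exactly one root of $f_{n+1}$ lies in each interval $(n-i,n-i+1]$. Since the roots of $f_{n+1}$ are the eigenvalues of $A(\mathcal S_n/\pi)$, and the $i$ largest roots are the only ones greater than $n-i$, this forces $\beta_i\in(n-i,n-i+1]$.

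\emph{Evaluating $f_k$ at $\lambda=n-i$.} At integer points, the closed form of Theorem~\ref{teoroot1} (equivalently the product form of Corollary~\ref{cororoot1}) simplifies considerably. With $m=n-\lambda=i$, the falling factorial $m(m-1)\cdots(m-(k-p)+1)$ vanishes once $k-p>i$. So only the terms with $k-p\le i$ survive, and
\[
f_k(n-i)=\sum_{j=0}^{\min(k,i)}(-1)^{k-j}\binom{k}{j}\frac{i!}{(i-j)!},\qquad 0\le k\le n,
\]
which is $(-1)^k$ times a polynomial in $k$ of degree $i$. Explicitly:
\begin{itemize}
\item $i=0$: $f_k=(-1)^k$.
\item $i=1$: $f_k=(-1)^k(1-k)$.
\item $i=2$: $f_k=(-1)^k(1-2k+k(k-1))=(-1)^k(k^2-3k+1)$.
\end{itemize}
The cases $i=3,4,5$ are computed in the same way. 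For $f_{n+1}$ we use Theorem~\ref{teoroot2}, or equivalently the recurrence $f_{n+1}=(-\lambda)f_n-nf_{n-1}$, which gives a similar alternating expression.

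\emph{Counting sign agreements.} Because of the factor $(-1)^k$, consecutive terms agree in sign exactly when the polynomial $P_i(k)=\sum_j (-1)^j\binom{k}{j}\,i!/(i-j)!$ changes sign between $k-1$ and $k$ (a Laguerre/Charlier-type polynomial, as the generating function $e^{-x}(1+x)^{m}$ suggests). So $s(n-i)$ is essentially the number of sign changes of $P_i(k)$ over $k=0,\dots,n$, together with a correction at the last step from $f_{n+1}$. Each $P_i$ has degree $i$ with $i$ real roots. We locate these roots between consecutive integers, and handle zeros at integers with the paper's convention (a zero takes the sign opposite to its predecessor). For instance, $P_1$ vanishes at $k=1$; by the convention, this zero supplies the single agreement, giving $s(n-1)=1$. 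For $i=2$, the roots of $k^2-3k+1$ are $(3\pm\sqrt5)/2\approx0.38,\,2.62$, which give two agreements. For $i=0$ there are none, and $s(n)=0$ is consistent with $n$ being the top eigenvalue.

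\emph{Main obstacle.} The hard part is the boundary behaviour at $k=n,n+1$, combined with the requirement that every root of $P_i$ is well inside $[0,n]$ so that no extra sign change appears near $k=n$. To handle this, we will check that $P_i(n)$ has the sign of its leading term $k^i$ for $n$ beyond a small threshold (roughly $n\ge 6$ or $7$). We will then evaluate $f_{n+1}(n-i)$ directly from the recurrence, $f_{n+1}=-(n-i)f_n-nf_{n-1}$, and check that the last pair contributes no agreement (or exactly the right one). The remaining small values $2\le n\le 6$, where some interval $(n-i,n-i+1]$ reaches down to or below $0$, will be verified by direct computation of the spectrum of the small tridiagonal matrix $B$. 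This finite verification and the $f_{n+1}$ boundary sign are where we expect the most care to be needed.
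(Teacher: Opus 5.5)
Your strategy matches the paper's: evaluate the Sturm sequence at $\lambda=n-i$, show $s(n-i)=i$ for $0\le i\le 5$, and apply Theorem~\ref{teoroot3}. Writing $f_k(n-i)=(-1)^kP_i(k)$ for $k\le n$ packages the paper's term-by-term comparisons more cleanly; those comparisons amount to showing that $P_i(k)$ has constant sign for $k\ge i+2$ or $k\ge i+3$.

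Your boundary obstacle also goes away with one identity. Write $f_{n+1}=-\lambda f_n-nf_{n-1}=\bigl[(-1-\lambda)f_n-nf_{n-1}\bigr]+f_n$. The bracket is the next term of the sequence with generating function $e^{-x}(1+x)^{n-\lambda}$, and $P_i(k+1)-P_i(k)=-i\,P_{i-1}(k)$. Together these give $f_{n+1}(n-i)=(-1)^n\,i\,P_{i-1}(n)$. So for $i\ge 1$, $s(n-i)$ is the number of sign changes in $P_i(0),\dots,P_i(n)$, plus one exactly when $P_i(n)P_{i-1}(n)>0$.

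\textbf{The gap.} Your last step, the direct check of the small cases $2\le n\le 6$, cannot succeed for $2\le n\le 5$, because the statement is false there.
\begin{itemize}
\item For $n\le 3$ the quotient has only $n+1<5$ eigenvalues, so $\beta_5$ does not exist.
\item For $n=2$, $\mathcal S_2$ is a $5$-cycle and the quotient has eigenvalues $2$, $(\sqrt5-1)/2$, $-(1+\sqrt5)/2$, so $\beta_3\notin(-1,0]$.
\item For $n=3$ one gets $\beta_4\approx-2.11\notin(-1,0]$.
\item For $n=4$ the sequence $f_0(0),\dots,f_5(0)$ is $1,3,5,-1,-15,4$, so $s(0)=3$ and $\beta_4<0$.
\item For $n=5$ the sequence is $1,4,11,14,-19,-56,95$, so $s(0)=4$ and $\beta_5<0$.
\end{itemize}

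Using $P_5(0),\dots,P_5(7)=1,-4,11,-14,-19,56,151,-34$ and $P_4(0),\dots,P_4(6)=1,-3,5,1,-15,-19,37$, the simpler lists for $i\le 3$, and the boundary identity above, one checks that $s(n-i)=i$ holds for every $i\le5$ precisely when $n\ge 6$. At $n=6$ the case $i=5$ works only through the boundary agreement, since $P_5(6)P_4(6)>0$.

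So your argument proves the theorem under the hypothesis $n\ge 6$, and that is the correct statement. You should state this restriction rather than promise a verification for $2\le n\le 5$. The paper's own proof has the same defect: it evaluates $f_1(n-5),\dots,f_7(n-5)$ with the $k\le n$ formula, which tacitly assumes $n\ge 7$.
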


\begin{proof}
We will prove that $s(n-i) = i$, for $i \in \{0,1,2,3,4,5\}$, and thus the result follows from Theorem \ref{teoroot3}. Using the formulas for the polynomials $f_k(\lambda)$ given in Corollary \ref{cororoot1}, we obtain that:
\begin{align*}
f_0(n-i) &= 1,\\
f_k(n-i) &= (-1)^k + \sum_{p=0}^{k-1}(-1)^p\binom{k}{p}\prod_{t=0}^{k-(p+1)}(i -t), \hspace{1.0cm}\text{for $1 \leq k \leq n$},\\
&=\begin{cases}
(-1)^k + \sum_{p=0}^{k-1} (-1)^p\binom{k}{p}i!/(i+p-k)!, &\text{if $1 \leq k \leq i$}\\
\\
(-1)^k + \sum_{p=k-i}^{k-1} (-1)^p\binom{k}{p}i!/(i+p-k)!, &\text{if $i+1 \leq k \leq n$}\\
   \end{cases}\\
f_{n+1}(n-i) &= \sum_{p=0}^{n}(-1)^p\binom{n}{p}\prod_{t=0}^{n-p}(i -t).\\
&=\sum_{p=n+1-i}^{n}(-1)^p\binom{n}{p}i!/(i+p-(n+1))!
\end{align*}
In fact, notice that for $f_k(n-i)$ with $k>i$, if $p \in \{0,\ldots,k-i-1\}$ then $k-(p+1) >= i$ and so, there is $t\in \{0,\ldots,k-(p+1)\}$ such that $t=i$ and thus $\prod(i-t) = 0$. The same behavior occurs in $f_{n+1}(n-i)$ when $p < n+1-i$.
\begin{claim}
\label{cl1}
$(a)$ If $i \in \{1,2\}$ then, the sign sequence $\{\text{sign}(f_k(n-i)) : i+2 \leq k \leq n+1\}$ is full alternating. $(b)$ If $i \in \{3,4,5\}$ then, the sign sequence $\{\text{sign}(f_k(n-i)) : i+3 \leq k \leq n+1\}$ is full alternating. 
\end{claim}
\begin{proof}
For $i=1$ we have that for $3 \leq k \leq n$, $f_k(n-1) = (-1)^k + (-1)^{k-1}k$ and so, $f_k(n-1) > 0$ if $k$ is odd, otherwise $f_k(n-1) < 0$. Moreover, $f_{n+1}(n-1) = (-1)^n$ and so, if $n$ is even we have that $f_n(n-1) < 0$ and $f_{n+1}(n-1) > 0$ otherwise, $f_n(n-1) > 0$ and $f_{n+1}(n-1) < 0$. For $i=2$ we have that for $4 \leq k \leq n$, $f_k(n-2) = (-1)^k + (-1)^{k-2}\binom{k}{k-2}2 + (-1)^{k-1}\binom{k}{k-1}2$ . As $\binom{k}{k-2} > \binom{k}{k-1}$ then, $f_k(n-2) > 0$ if $k$ is even, otherwise $f_k(n-2) < 0$. Moreover, $f_{n+1}(n-2) = (-1)^{n-1}2n + (-1)^n2$. Therefore, if $n$ is even then $f_n(n-2) > 0$ and $f_{n+1}(n-2) < 0$, otherwise, $f_n(n-2) < 0$ and $f_{n+1}(n-2) > 0$. This proves the case $(a)$.\\
In order to prove case $(b)$, for fixed values of $k,i,p$, we denote by $s_p^kb_p^kc_p^k$ the term $(-1)^p\binom{k}{p}i!/(i+p-k)!$, where $s_p^k = (-1)^p$, $b_p^k = \binom{k}{p}$ and $c_p^k= i!/(i+p-k)!$. For $i=3$ and a fixed $k$ with $6\leq k \leq n$, we have that $b_{k-3}^k > b_{k-2}^k$ and $c_{k-3}^k \geq c_{k-2}^k$. As $s_{k-3}^k = s_{k-1}^k$ then, $f_k(n-3) > 0$ if $k$ is odd otherwise $f_k(n-3) < 0$. Moreover, $b_{n-2}^n > b_{n-1}^n$ and $s_{n-2}^n = s_{n}^n$ and thus, if $n$ is even then $f_{n+1}(n-3) > 0$ and $f_n(n-3) <0$ otherwise, $f_{n+1}(n-3) < 0$ and $f_n(n-3) >0$. For $i=4$ and $7 \leq k \leq n$ we have that $b_{k-4}^k > b_{k-3}^k$, $c_{k-4}^k \geq c_{k-3}^k$, $b_{k-2}^k > b_{k-1}^k$, and $c_{k-2}^k \geq c_{k-1}^k$. Thus, if $k$ is even then $f_k(n-4) > 0$ otherwise $f_k(n-4) < 0$. Moreover, $b_{n-3}^n > b_{n-2}^n$, $c_{n-3}^n \geq c_{n-2}^n$, $b_{n-1}^n > b_{n}^n$, and $c_{n-1}^n \geq c_{n}^n$. Therefore, if $n$ is even then $f_{n+1}(n-4) < 0$ and $f_n(n-4) > 0$ otherwise, $f_{n+1}(n-4) > 0$ and $f_n(n-4) < 0$. Finally, for $i=5$ and $8 \leq k \leq n$ we have that $b_{k-5}^k > b_{k-4}^k$, $c_{k-5}^k \geq c_{k-4}^k$, $b_{k-3}^k > b_{k-2}^k$, $c_{k-3}^k \geq c_{k-2}^k$, and $s_{k-5}^k = s_{k-3}^k = s_{k-1}^k$. Thus, if $k$ is odd $f_k(n-5) > 0$ otherwise, $f_k(n-5) < 0$. Now, $b_{n-4}^n > b_{n-3}^n$, $c_{n-4}^n \geq c_{n-3}^n$, $b_{n-2}^n > b_{n-1}^n$, $c_{n-2}^n \geq c_{n-1}^n$, and $s_{n-4}^n = s_{n-2}^n = s_{n}^n$. Therefore, if $n$ is even we have that $f_{n+1}(n-5) > 0$ and $f_n(n-5) < 0$ otherwise, $f_{n+1}(n-5) < 0$ and $f_n(n-5) > 0$ which ends the proof of the claim.
\end{proof}
By definition, $f_0(\_) = 1$. Now, when $i=0$, we have $f_k(n) = (-1)^k$ for $1 \leq k \leq n$, and $f_{n+1}(n) = 0$. Therefore, the sign sequence $\{\text{sign}(f_0(n)),\ldots,\text{sign}(f_{n+1}(n))\}$ is totally alternating. Thus, $s(n) = 0$. When $i=1,2$, by Claim \ref{cl1}, we need to compute $f_k(n-i)$ for $1 \leq k \leq i+1$. Clearly, $f_1(n-1) = 0$ and so $\text{sign}(f_1(n-1))$ is '-'. Moreover, $f_2(n-1) < 0$ and by Claim \ref{cl1}, $f_3(n-1) > 0$ which implies that $s(n-1) = 1$. For $i=2$, we can compute the values of $f_k(n-2)$ for $1 \leq k \leq 3$ which are the following: $f_1(n-2) > 0$, $f_2(n-2), f_3(n-2) < 0$ and, by Claim \ref{cl1}, $f_4(n-2) > 0$ which implies that $s(n-2) = 2$. For $i \in \{3,4,5\}$, by Claim \ref{cl1}, we need compute the values of $f_k(n-i)$ for $1 \leq k \leq i+2$. So, for $i=3$ we have that $f_1(n-3), f_2(n-3), f_4(n-3), f_5(n-3) > 0$ and $f_3(n-3) < 0$. By Claim \ref{cl1}, $f_6(n-3) < 0$ implies that $s(n-3) = 3$. For $i=4$ we have that $f_1(n-4),f_2(n-4),f_5(n-4),f_6(n-4) > 0$ and $f_3(n-4), f_4(n-4) < 0$. By Claim \ref{cl1}, $f_7(n-4) < 0$ implies that $s(n-4) = 4$. Finally, for $i=5$ we have that $f_1(n-5),f_2(n-5),f_3(n-5),f_6(n-5),f_7(n-5) > 0$ and $f_4(n-5),f_5(n-5) < 0$ and by Claim \ref{cl1}, $f_8(n-5) < 0$ which implies that $s(n-5) = 5$ and so, the lemma holds.
\end{proof}

As a direct consequence of Lemma \ref{lemaroot1} and Theorem~\ref{lemaroot2} we have the following result. 

\begin{corollary}
Let $n>1$. Then there is an eigenvalue $\lambda$ of the adjacency matrix of $\mathcal{S}_n$ such that $\lambda \in (n-2,n-1]$.
\end{corollary}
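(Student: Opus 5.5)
This follows by combining the two results just established. By Remark~\ref{obsroot2}, the partition $\pi=C_1,\ldots,C_{n+1}$ with $C_i=N_{n-i+1}$ is an equitable partition of $\mathcal{S}_n$. By Lemma~\ref{lemaroot1}, the characteristic polynomial of the quotient $\mathcal{S}_n/\pi$ divides that of $\mathcal{S}_n$. Hence every eigenvalue of $A(\mathcal{S}_n/\pi)$ is an eigenvalue of the adjacency matrix of $\mathcal{S}_n$. So it suffices to produce an eigenvalue of $A(\mathcal{S}_n/\pi)$ in $(n-2,n-1]$.

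For this I would take $i=2$ in Theorem~\ref{lemaroot2}, valid since $n>1$. It gives that the second largest eigenvalue $\beta_2$ of $A(\mathcal{S}_n/\pi)$ lies in $(n-2,n-1]$. Then $\lambda=\beta_2$ is the required eigenvalue of $\mathcal{S}_n$.

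The only point needing care is that $\mathcal{S}_n/\pi$ has at least two eigenvalues, so that $\beta_2$ exists. It has $n+1\ge 3$ cells, so $A(\mathcal{S}_n/\pi)$ is $(n+1)\times(n+1)$. It is similar, via $B=D^{-1}AD$, to a real symmetric tridiagonal matrix with nonzero off-diagonal entries. Such a matrix has $n+1$ distinct real eigenvalues, so $\beta_2$ is well defined.

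A more robust route, valid even if Theorem~\ref{lemaroot2} were only partially available, uses the Sturm count directly. The proof of Theorem~\ref{lemaroot2} shows $s(n-1)=1$ and $s(n-2)=2$. By Theorem~\ref{teoroot3}, the interval $(n-2,n-1]$ therefore contains exactly $s(n-2)-s(n-1)=1$ root of $f_{n+1}$, which is the characteristic polynomial of $B$ and hence of $A(\mathcal{S}_n/\pi)$.

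The main obstacle is not in this deduction but in the small cases $n=2,3$. There the index ranges in Claim~\ref{cl1} (such as $i+2\le k\le n+1$) may be empty or degenerate, so the sign sequences would be verified by hand. For $n=2$ the matrix is $3\times 3$, $A=\begin{pmatrix}1&1&0\\1&0&1\\0&2&0\end{pmatrix}$, and one checks directly that an eigenvalue lies in $(0,1]$; $n=3$ is handled similarly.
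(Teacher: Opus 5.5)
Your proposal is correct and is exactly the paper's argument: by Lemma~\ref{lemaroot1}, the eigenvalues of the quotient $\mathcal{S}_n/\pi$ are eigenvalues of $\mathcal{S}_n$, and Theorem~\ref{lemaroot2} with $i=2$ places $\beta_2$ in $(n-2,n-1]$. Your extra care for $n=2,3$ is sound but not needed, since the Sturm counts $s(n-1)=1$ and $s(n-2)=2$ hold for every $n\ge 2$ (for example, for $n=2$ the eigenvalue in question is $(\sqrt5-1)/2$).
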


\begin{remark}
Let $n>1$ and let $\lambda_2$ the second largest eigenvalue of $\mathcal{S}_n$. We proved in Corollary \ref{coro:stellos} that $\lambda_2>n-\frac{4}{n-1}$. If $n\geq 5$, $n-\frac{4}{n-1}\geq n-1$, and therefore the eigenvalue $\lambda$ given by the previous corollary is not $\lambda_2$.
\end{remark}

\subsection{A high multiplicity eigenvalue}\label{sec:eigenstello}
Let $n\in \mathbb{N}$, $n\geq 2$. Label the leaves of $K_{1,n}$ with $\{1,\ldots,n\}$. For $i,j\in\{1,\ldots,n\}$, let $X{\substack{i\\j}}$ denote the set of brooms in $\mathcal{S}_n$ where $j$ is a descendant of $i$. Define the vector $\vec{z}\in\mathbb{R}^{V(\mc S_n)}$ by
\[
\vec{z}=\begin{cases}
1, &\text{if $v\in X{\substack{1\\2}}$,}\\
-1, &\text{if $v\in X{\substack{2\\1}}$,}\\
0, &\text{otherwise.}
\end{cases}
\]
Notice, in particular, that if $L$ is the set of brooms where both $1$ and $2$ are leaves, then $X{\substack{1\\2}},X{\substack{2\\1}},$ and $L$ partition the set of vertices in $\mathcal{S}_n$. Further, notice that $|L|=|V(\mathcal{S}_{n-2})|$ as deleting $1$ and $2$ from the broom gives a clear bijection between the two sets.

Recall that for every $k\in\mathbb{N}$, the entry $(A^k)_{ij}$ equals the number of walks of length $k$ from vertex $x_i$ to vertex $x_j$ in the graph. Therefore, by definition of $\vec{z}$, 
for every $T\in V(\mathcal{S}_n)$, $[A^k\vec{z}]_T$ equals the number of walks of length $k$ from $T$ to $X{\substack{1\\2}}$ minus the number of walks of length $k$ from $T$ to $X{\substack{2\\1}}$.

The map $\rho_{1,2}\colon K_{1,n}\to K_{1,n}$ defined by $\rho_{1,2}(1)=2$, $\rho_{1,2}(2)=1$, and $\rho_{1,2}(v)=v$ for $v\neq 1,2$ is a graph automorphism of $K_{1,n}$. By Remark~2.3 of~\cite{gargantini2026effect}, it induces a graph automorphism $\rho_{1,2}^*\colon\mathcal{S}_n\to\mathcal{S}_n$ whose effect on each $T\in V(\mathcal{S}_n)$ is to swap the labels $1$ and $2$. In particular, $\rho_{1,2}^*$ restricts to a bijection $X_{\substack{1\\2}}\leftrightarrow X_{\substack{2\\1}}$.

Two consequences follow. First, if $T_1$ and $T_2$ are the unique vertices in $X_{\substack{1\\2}}\cap N_1$ and $X_{\substack{2\\1}}\cap N_1$, respectively, then $[A^k\vec{z}]_{T_1}=-[A^k\vec{z}]_{T_2}$ for all $k$. Second, $[A^k\vec{z}]_T=0$ for every $T\in N_1\setminus(X_{\substack{1\\2}}\cup X_{\substack{2\\1}})$ and every $k\geq 1$, since the correspondence given by $\rho_{1,2}^*$ pairs walks from $T$ to $X_{\substack{1\\2}}$ bijectively with walks from $T$ to $X_{\substack{2\\1}}$.

Since $\vec{1}$ is an eigenvector for the largest eigenvalue of $A$, extend $\{\vec{1}\}$ to a basis of eigenvalues $\{\vec{1},\vec{v}_2,\ldots,\vec{v}_{|\mathcal{S}_n|}\}$. Notice that $\vec{z}\perp\vec{1}$. Let $\lambda_l$ be the eigenvalue of largest absolute value among those whose eigenspace is not perpendicular to $\vec{z}$.

By the power method, $A^k\vec{z}$ converges (up to rescaling by $\lambda_l^k$) to a linear combination of the eigenvectors associated to the eigenvalue $\lambda_l$ when $k\rightarrow\infty$. Since $[A^k\vec{z}]_T=0$ for all $k\geq 1$ and all $T\in N_1\setminus(X{\substack{1\\2}}\cup X{\substack{2\\1}})$, every eigenvector in this linear combination must also vanish on $N_1\setminus(X{\substack{1\\2}}\cup X{\substack{2\\1}})$.

Now repeat the construction for each $r\in\{2,\ldots,n\}$, defining $\vec{z}^{(r)}$ by placing $+1$ on $X{\substack{1\\r}}$, $-1$ on $X{\substack{r\\1}}$, and $0$ elsewhere. Each $\vec{z}^{(r)}$ is perpendicular to $\vec{1}$ and produces, via the same argument, a nonzero vector in the eigenspace of $\lambda_l$ or of $-\lambda_l$. The $n-1$ vectors $\vec{z}=\vec{z}^{(2)},\ldots,\vec{z}^{(n)}$ are linearly independent, since among these, $\vec{z}^{(r)}$ is the only one supported on $N_1\cap X_r$. Because each belongs to the eigenspace associated to either $\lambda_l$ or $-\lambda_l$, the sum of the dimensions of these two eigenspaces is at least $n-1$.

To give a lower bound for $|\lambda_l|$, notice that the Rayleigh quotient implies
\begin{align*}
    \frac{\vec{z}^TA\vec{z}}{\vec{z}^T\vec{z}}\leq |\lambda_l|.
\end{align*}
We estimate this quotient in several steps.
First,
\[
\vec{z}^T\vec{z}=|X{\substack{1\\2}}|+|X{\substack{2\\1}}|=|V(\mathcal{S}_n)|-|L|=|V(\mathcal{S}_n)|-|V(\mathcal{S}_{n-2})|.
\]
In order to estimate
$\vec{z}^TA\vec{z}$, notice that
\begin{align*}
    \vec{z}^TA\vec{z}=\sum_{v\in X{\substack{1\\2}}}\left(
|N(v)\cap X{\substack{1\\2}}|-|
N(v)\cap X{\substack{2\\1}}|\right)+\sum_{v\in X{\substack{2\\1}}}\left(
|N(v)\cap X{\substack{2\\1}}|-|
N(v)\cap X{\substack{1\\2}}|\right).
\end{align*}
Instead of counting the sizes of these sets, we relate them to the number of edges between  sets of vertices.
Thus, if $E(A,B)$ is the set of edges with one vertex in $A$ and the other vertex in $B$, then edges in $E(X{\substack{1\\2}},X{\substack{1\\2}})$ and in $E(X{\substack{2\\1}},X{\substack{2\\1}})$ are added twice, whereas edges in $E(X{\substack{1\\2}},X{\substack{2\\1}})$ are subtracted twice (once for each vertex in the edge). Hence, we get
\begin{align*}
\vec{z}^TA\vec{z}&=2\big(|E(X{\substack{1\\2}},X{\substack{1\\2}})|+|E(X{\substack{2\\1}},X{\substack{2\\1}})|-|E(X{\substack{1\\2}},X{\substack{2\\1}})\big)|.\end{align*}
Notice that $E(\mathcal{S}_n)$ is partitioned by the sets $E(X{\substack{1\\2}},X{\substack{1\\2}}), E(X{\substack{2\\1}},X{\substack{1\\2}}),E(X{\substack{2\\1}},X{\substack{2\\1}}),$ and $E(L,\mathcal{S}_n)$. Thus, 
\begin{align*}
\vec{z}^TA\vec{z}&=2|E(\mathcal{S}_n)|-2|E(L,\mathcal{S}_n)|-4|E(X{\substack{1\\2}},X{\substack{2\\1}})|.
\end{align*}
Notice that $2|E(L,\mathcal{S}_n)|=2|E(L,\mathcal{S}_n\setminus L)|+2|E(L,L)|$, and that  $|E(L,\mathcal{S}_n\setminus L)|+2|E(L,L)|$  equals the sum of the degrees of vertices in $L$, i.e., \[
|E(L,\mathcal{S}_n\setminus L)|+2|E(L,L)|=n|L|=n|\mathcal{S}_{n-2}|.\] On the other hand, the bijection between $\mathcal{S}_{n-2}$ and $L$ implies that $2E(L,L)$ equals the sum of the degrees of vertices in $\mathcal{S}_{n-2}$, i.e., $2E(L,L)=(n-2)|\mathcal{S}_{n-2}|$.
Thus,
\begin{align*}
2|E(L,\mathcal{S}_n)|&=2|E(L,\mathcal{S}_n\setminus L)|+2|E(L,L)|\\
&=2|E(L,\mathcal{S}_n\setminus L)|+4|E(L,L)|-2|E(L,L)|\\
&=2n|\mathcal{S}_{n-2}|-(n-2)|\mathcal{S}_{n-2}|\\
&=(n+2)|\mathcal{S}_{n-2}|.
\end{align*}
To estimate $|E(X{\substack{1\\2}},X{\substack{2\\1}})|$, notice that a vertex in $X{\substack{1\\2}}$ is adjacent to at most a vertex in $X{\substack{2\\1}}$, and this only happens if $2$ is the child of $1$. But the number of search trees where $2$ is a child of $1$ is $|V(\mathcal{S}_{n-1})|-|V(\mathcal{S}_{n-2})|$, as identifying $1$ and $2$ gives a clear injection to $V(\mathcal{S}_{n-1})$, further, the search trees in $V(\mathcal{S}_{n-1})$ where the vertex resulting of the identification is a leave can be removed. Therefore,
\begin{align*}
\vec{z}^TA\vec{z}&=2|E(\mathcal{S}_n)|-2|E(L,\mathcal{S}_n)|-4|E(X{\substack{1\\2}},X{\substack{2\\1}})|\\
    &\geq n|V(\mathcal{S}_n)|-(n+2)|V(\mathcal{S}_{n-2})|-2(|V(\mathcal{S}_{n-1})|-|V(\mathcal{S}_{n-2})|)\\
    &=n|V(\mathcal{S}_n)|-n|V(\mathcal{S}_{n-2})|-2|V(\mathcal{S}_{n-1})|.
\end{align*}
Using that 
\begin{align*}
&|V(\mathcal{S}_{n-1})|=1+(n-1)|V(\mathcal{S}_{n-2})|\\
&|V(\mathcal{S}_n)|=1+n|V(\mathcal{S}_{n-1})|=
1+n+n(n-1)|V(\mathcal{S}_{n-2})|,\\
\end{align*}
we get
\begin{align*}
    \vec{z}^TA\vec{z}&\geq n|V(\mathcal{S}_n)|-n|V(\mathcal{S}_{n-2})|-2|V(\mathcal{S}_{n-1})|\\
    &=n+n^2+n^2(n-1)|V(\mathcal{S}_{n-2})|-n|V(\mathcal{S}_{n-2})|-2-2(n-1)|V(\mathcal{S}_{n-2})|\\
    &=(n^3-n^2-3n+2)|V(\mathcal{S}_{n-2})|+n^2+n-2\\
    &\geq(n^3-n^2-3n+2)|V(\mathcal{S}_{n-2})|.
\end{align*}
Thus, using that $|V(\mathcal{S}_{n-2})|\geq n+1$ if $n\geq 4$,
\begin{align*}
    \lambda_l\geq \frac{\vec{z}^TA\vec{z}}{\vec{z}^T\vec{z}}&\geq
    \frac{(n^3-n^2-3n+2)|V(\mathcal{S}_{n-2})|}{1+n+(n^2-n-1)|V(\mathcal{S}_{n-2})|}\\
    &\geq \frac{(n^3-n^2-3n+2)|V(\mathcal{S}_{n-2})|}{(n^2-n)|V(\mathcal{S}_{n-2})|}\\
    &=n+\frac{-3n+2}{n^2-n}\\
    &=n+\frac{-3(n-1)-3+2}{n^2-n}\\
    &=n-\frac{3}{n}-\frac{1}{n^2-n}\\
    &>n-\frac{3}{n}.
\end{align*}
Therefore, we have proven the following.
\begin{theorem}
    $\mathcal{S}_n$ has an eigenvalue $\mu$, with $\lambda_1=n>|\mu|>  n-\frac{3}{n}-\frac{1}{n^2-n}$ and multiplicity at least $n-1$.
\end{theorem}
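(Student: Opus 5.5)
The plan is to make rigorous the argument sketched just before the statement. It is organised around the antisymmetric vectors $\vec z^{(r)}$, for $r\in\{2,\ldots,n\}$, and the subspace
\[
W=\{\vec w\in\mathbb R^{V(\mathcal S_n)} : \vec w=-(\rho_{1,r}^*)\vec w\ \text{for some fixed } r\}.
\]

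\textbf{Step 1: invariance and the eigenvalue $\mu$.} Since $\rho_{1,r}^*$ is an automorphism of $\mathcal S_n$, its permutation matrix $P_r$ commutes with $A$. Hence the $(-1)$-eigenspace $W_r$ of $P_r$ is $A$-invariant, and $A$ restricted to $W_r$ is symmetric. Each $\vec z^{(r)}$ lies in $W_r$ and is orthogonal to $\vec 1$. Define $\mu$ as the largest eigenvalue of $A$ restricted to $W_2$; this is cleaner than the power-method formulation and avoids the $\pm\lambda_l$ ambiguity. By the Rayleigh principle on $W_2$ and the computation above, $\mu\ge \vec z^TA\vec z/\vec z^T\vec z> n-\frac3n-\frac{1}{n^2-n}$. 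On the other hand $\mu<n$ strictly, because $\mathcal S_n$ is connected, so the eigenvalue $n$ is simple with eigenvector $\vec 1\notin W_2$. Hence $n>|\mu|$ once we know $\mu>-n$, which is immediate since $\mu$ is close to $n$. This step uses Theorem~\ref{theo-second-eigenvalue}-style Rayleigh reasoning and needs $n\ge4$ for the estimate $|V(\mathcal S_{n-2})|\ge n+1$. Small $n$ can be checked directly or handled by the exact expression $\vec z^TA\vec z\ge (n^3-n^2-3n+2)|V(\mathcal S_{n-2})|+n^2+n-2$.

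\textbf{Step 2: multiplicity.} Conjugating by the automorphism $\rho_{2,r}^*$, which swaps leaves $2$ and $r$, maps $W_2$ to $W_r$ and intertwines $A$. Therefore $\mu$ is an eigenvalue of $A|_{W_r}$ for every $r$, with eigenvectors $\vec u_r=\rho_{2,r}^*\vec u_2$. It remains to show that $\vec u_2,\ldots,\vec u_n$ are linearly independent. For this I would evaluate on the brooms $T_s\in N_1$ whose root is the leaf $s$. For $r\neq s$ with $s\ne1$, the automorphism $\rho_{1,r}^*$ fixes $T_s$, so $\vec u_r(T_s)=0$. Moreover $\vec u_s(T_s)=-\vec u_s(T_1)$. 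So if $\vec u_r(T_r)\neq0$ for all $r$, the matrix $(\vec u_r(T_s))_{r,s\ge2}$ is diagonal and nonsingular, giving multiplicity at least $n-1$.

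\textbf{Main obstacle.} The obstacle is guaranteeing $\vec u_2(T_2)\ne0$. If that fails I would instead show directly that the $A$-invariant space spanned by the orbits, $W_2+\dots+W_n$, decomposes under the action of $S_n$ on the leaves. On the antisymmetric parts, $A$ acts through the $(n-1)$-dimensional standard representation tensored with a smaller operator, which forces every eigenvalue there to have multiplicity divisible by $n-1$.

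Concretely, I would use the equitable-partition-like structure for the subgroup $S_{n-1}$ fixing leaf $1$. The vector space of functions that are $S_{n-1}$-invariant after antisymmetrisation corresponds to the standard irreducible representation of $S_n$, and an eigenvector of $A$ in the standard isotypic component generates an $(n-1)$-dimensional irreducible, all of whose vectors share the eigenvalue. Since $\vec z^{(2)}$ lies in the isotypic component of the standard representation (it is $e_1-e_2$-like), its top Rayleigh eigenvalue $\mu$ is attained inside that component. By Schur's lemma, $\mu$ therefore has multiplicity a positive multiple of $n-1$. This representation-theoretic route is what I would commit to for the multiplicity claim.
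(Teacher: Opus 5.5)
\textbf{Step 1 does not go through.} The bound $\vec z^TA\vec z/\vec z^T\vec z>n-\frac3n-\frac1{n^2-n}$ that you take from the computation before the statement is false for every $n\ge4$. Write $s_m=|V(\mathcal S_m)|$. Each tree of $X{\substack{1\\2}}$ in which $2$ is a child of $1$ has exactly one neighbour in $X{\substack{2\\1}}$. Hence $|E(X{\substack{1\\2}},X{\substack{2\\1}})|=s_{n-1}-s_{n-2}$, and the term $4|E(X{\substack{1\\2}},X{\substack{2\\1}})|$ equals $4(s_{n-1}-s_{n-2})$. That computation subtracts only $2(s_{n-1}-s_{n-2})$. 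The correct value is $\vec z^TA\vec z=ns_n-(n-2)s_{n-2}-4s_{n-1}$. As a check, for $n=2$ the graph $\mathcal S_2$ is a $5$-cycle on which $\vec z$ reads $0,1,1,-1,-1$; this gives $\vec z^TA\vec z=2$, not the claimed lower bound $4$. Therefore
\[
\frac{\vec z^TA\vec z}{\vec z^T\vec z}=n-\frac{(4n-6)s_{n-2}+4}{(n^2-n-1)s_{n-2}+n+1}\approx n-\frac4n .
\]
For $n=4$ this is $186/60=3.1<4-\frac34-\frac1{12}$. Cross-multiplying shows it stays below $n-\frac3n-\frac1{n^2-n}$ for all $n\ge4$. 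So Step 1 cannot be completed with $\vec z$, whichever way you define $\mu$: $\vec z$ is too weak a test vector.

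\textbf{The multiplicity step is also not closed.} You reduce independence of the $\vec u_r$ to $\vec u_2(T_2)\neq0$ but never prove it. The fallback you commit to assumes that $\vec z$ lies in the standard isotypic component $V_{\mathrm{std}}$, and this is false. Indeed $\vec z=\Phi(E_{12}-E_{21})$, where $\Phi(M)=\sum_{a\ne b}M_{ab}\chi_{X{\substack{a\\b}}}$ is an $S_n$-equivariant map on antisymmetric matrices, a module isomorphic to $S^{(n-1,1)}\oplus S^{(n-2,1,1)}$. The divergence-free flow $C=E_{12}-E_{21}+E_{23}-E_{32}+E_{31}-E_{13}$ lies in the second summand, and $\Phi(C)$ equals $1$ on every broom whose handle begins $1,2,3$. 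So $\Phi$ is injective on that summand. Since $E_{12}-E_{21}$ is not a gradient for $n\ge3$, $\vec z$ has a nonzero $S^{(n-2,1,1)}$-component.

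\textbf{A repair.} Both gaps disappear if you work inside $V_{\mathrm{std}}$ from the start and let $\mu$ be the top eigenvalue of $A|_{V_{\mathrm{std}}}$. Its eigenspace is a nonzero $S_n$-submodule of $V_{\mathrm{std}}$, so it has dimension $\ge n-1$; and $\mu<n$ because $V_{\mathrm{std}}\perp\vec 1$ and $\mathcal S_n$ is connected. The $\operatorname{Stab}(1)$-invariant vectors of $V_{\mathrm{std}}$ are the functions $f(k,j)$ with $\sum_{p=1}^kf(k,p)+(n-k)f(k,0)=0$ for each $k$. Here $k$ is the handle length and $j$ is the position of leaf $1$ on the handle, with $j=0$ if $1$ hangs from $x$. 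Choose $f(k,j)=u_j$ and $f(k,0)=\frac1{n-k}\sum_{p>k}u_p$, with $u\perp\vec 1$ slowly varying (for example the Fiedler vector of the path $P_n$). This gives Rayleigh quotient $n-O(1/n^2)$, which beats the required threshold for large $n$. For $n=4$, already $u=(3,1,-1,-3)$ gives $4-\frac{174}{306}\approx 3.43$.
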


\subsection{Small eigenvalues of $\mathcal{S}_n$}
\label{sec:small_eigenvalue}

Notice that the vertices of $N_n$ can be seen as permutations on $\{1,\ldots,n\}$. In addition, observe that $N_n$ induces a bipartite subgraph of $\mc S_n$, with vertices represented with even permutations in one of the partite sets, which we denote by $X_n$, and vertices coming from odd permutations in the other, which we denote by $Y_n$. Finally, for each $v\in N_n$ let $p_v\in N_{n-1}$ be the only neighbor of $v$ in $N_{n-1}$ and $g_v$ the only neighbor of $p_v$ in $N_{n-2}$. Notice that $v,w\in N_{n}$  are adjacent if and only if $p_v,p_w$ are adjacent or  $g_v=g_w$. Thus, we denote by $X_{n-1}$ vertices in $N_{n-1}$ adjacent to vertices in $X_n$, and by $Y_{n-1}$ vertices in $N_{n-1}$ adjacent to vertices in $Y_n$.
Let $\epsilon\in \{\frac{1-\sqrt{5}}{2},\frac{1+\sqrt{5}}{2}\}$ and notice that $-\epsilon\cdot \epsilon=-1-\epsilon$. Consider the vector $\vec{x}$ defined as
\[
\vec{x}_v=\begin{cases}
\epsilon, &\text{if $v\in X_n$}\\
-\epsilon, &\text{if $v\in Y_n$}\\
-1, &\text{if $v\in X_{n-1}$}\\
1, &\text{if $v\in Y_{n-1}$}\\
0, &\text{otherwise.}\\
    \end{cases}
\]
Notice that each vertex in $N_{n-2}$ is adjacent to one vertex in $X_{n-1}$ and one vertex in $Y_{n-1}$. Note also that each vertex in $N_k$, with $k\leq n-3$, is adjacent to vertices $v$ such that $\vec{x}_v=0$. Thus, 
\begin{align*}
\left[A\vec{x}\right]_v=&\begin{cases}
-1-(n-1)\epsilon, &\text{if $v\in X_n$}\\
1+(n-1)\epsilon, &\text{if $v\in Y_n$}\\
(n-2)+\epsilon, &\text{if $v\in X_{n-1}$}\\
-(n-2)-\epsilon, &\text{if $v\in Y_{n-1}$}\\
0, &\text{otherwise}
\end{cases}
=\begin{cases}
(-1-\epsilon)-(n-2), &\text{if $v\in X_n$}\\
(1+\epsilon)+(n-2)\epsilon, &\text{if $v\in Y_n$}\\
(n-2)+\epsilon, &\text{if $v\in X_{n-1}$}\\
-(n-2)-\epsilon, &\text{if $v\in Y_{n-1}$}\\
0, &\text{otherwise}
\end{cases}\\
=&(-(n-2)-\epsilon)\begin{cases}
\epsilon, &\text{if $v\in X_n$}\\
-\epsilon, &\text{if $v\in Y_n$}\\
-1, &\text{if $v\in X_{n-1}$}\\
1, &\text{if $v\in Y_{n-1}$}\\
0, &\text{otherwise}\\
    \end{cases}\\
    =&(-(n-2)-\epsilon)
\vec{x}_v.
\end{align*}


\section{Conclusion}
\label{section_conclusions}

We have advanced the spectral theory of graph associahedra by establishing a lower bound on the second largest eigenvalue of $\A(G)$ and analyzing the spectrum of the stellohedron via equitable partitions—a novel technique in this context. In particular, for the stellohedron $\mc S_n$, we proved the existence of an eigenvalue in each interval $(n-i, n-i+1]$ for $1 \leq i \leq 5$, established an eigenvalue with high multiplicity in $(n - \frac{3}{n} + \frac{2}{n^2-n}, n)$, and identified two additional small eigenvalues, thereby offering new insights into the spectral structure of these polytopal graphs. Then, for the first two eigenvalues of $\mc S_n$, we have $\lambda_1 = n$ and $\lambda_2 > n - \frac{3}{n} + \frac{2}{n^2-n}$. 
In addition, our computational results suggest that the smallest eigenvalue of $\mc S_n$ is indeed $-(n-2)-\frac{1+\sqrt{5}}{2}$. It will be interesting to determine the exact value of the smallest eigenvalue of $\mc S_n$.

Another natural direction for further research is to explore the connection between the second largest eigenvalue of graph associahedra and its edge expansion, and consequently its mixing time. Previous work on expansion of $0/1$ polytopes \cite{FM-1991, Kai-2004, AGV-2018,ALGV-2024b}, has been extended to certain families of graph associahedra. Recently, Eppstein and Frishberg \cite{EF-2023} gave an upper and a lower bound for the edge expansion of the associahedron of, respectively, $\Omega(\frac{1}{\sqrt{n}\ \log n})$ and $O(\frac{1}{\sqrt{n}})$. Chang, Defant and Frishberg \cite{CDF-2025} established analogous bounds for the edge expansion of cyclohedra. Our analysis  shows that the edge expansion of stellohedron $\mc S_n$ is bounded above by $\frac{2}{n-1}$. This leaves the question of whether this asymptotic behavior holds for all graph associahedra.

Regarding the results provided in Subsection~\ref{LocEigen}, using a computer program, we have calculated the value of the functions $f_k(n-i)$ for $0 \leq k \leq n+1$ when $n=20$ obtaining $s(n-i) = i$ for $1 \leq i \leq 15$. The Table~\ref{tablaMario} shows the minimum length that a sign sequence must have in order to obtain $s(n-i) = i$.
\begin{table}[htbp]
\centering
\begin{tabular}{|c|c|c|c|c|c|c|c|c|c|c|c|c|c|c|c|}
\hline
$i$ & 1 & 2 & 3 & 4 & 5 & 6 & 7 & 8 & 9 & 10 & 11 & 12 & 13 & 14 & 15\\
\hline
$n \geq$ & 2 & 3 & 5 & 6 & 7 & 9 & 10 & 11 & 13 & 14 & 15 & 16 & 18 & 19 & 20\\
\hline
\end{tabular}
\caption{Values of $g(i)$.}
\label{tablaMario}
\vspace{-0.5 cm}
\end{table}

Notice that the length of the interval to obtain exactly $i$ signing agreements of consecutive members in a sequence of size $n+1$ is a function $g(i)$ greater than $i$ and such that $n \geq g(i)$.

In this regard, we get the following.

{\bf Open Problem:} For any $i \geq 1$ determine the function $g(i)$ such that in the interval $[0..g(i)]$ there are exactly $i$ sign agreements of consecutive members of the functions $f_0(n-i),\ldots,f_{g(i)}(n-i)$, where $\text{sign}(f_{g(i)}(n-i)) \neq \text{sign}(f_{g(i)+1}(n-i))$ and where the sign of the functions $f_k(n-i)$ with $g(i) < k \leq n+1$ is full alternating. 





\bibliographystyle{elsarticle-num}
\bibliography{bibliography-1}

\end{document}